\titleformat{\paragraph}[runin]
{\normalfont\normalsize\bfseries}{\theparagraph}{1em}{}
\titleformat{\subparagraph}[runin]
{\normalfont\normalsize\bfseries}{\thesubparagraph}{1em}{}
\DeclareOldFontCommand{\sc}{\normalfont\scshape}{\@nomath\sc}
\crefname{section}{Sect.}{Sects.}
\Crefname{section}{Section}{Sections}
\newtheorem{Theorem}{Theorem}
\newtheorem{proposition}{Proposition}
\newtheorem{lemma}{Lemma}
\newtheorem{assumption}{Assumption}
\newtheorem{example}{Example}
\crefname{enumi}{Assumption}{Assumptions}
\crefname{corollary}{Corollary}{Corollaries}
\crefname{proposition}{Proposition}{Propositions}
\Crefname{assumption}{Assumption}{Assumptions}
\newlist{enumthm}{enumerate}{1}
\setlist[enumthm]{label=\textup{(\alph*)},ref=\theassumption~\textup{(\alph*)}}
\newcommand{\hsp}{H}
\newcommand{\csp}{U}
\newcommand{\ssp}{Y}
\newcommand{\asp}{Z}
\newcommand{\adcsp}{U_{\text{ad}}}
\newcommand{\bsp}{V}
\newcommand{\nbsp}{V}
\newcommand{\nnbsp}{W}
\newcommand{\rv}{Z}
\newcommand{\rvariable}{X}
\newcommand{\randomsum}{S}
\newcommand{\norm}[2][2]{\|#2\|_{#1}}
\DeclarePairedDelimiterXPP\cnorm[2]{}\lVert\rVert{_{#1}}{#2}
\renewcommand{\natural}{\mathbb{N}}
\newcommand{\tfa}{\text{for all}}
\newcommand{\tand}{\text{and}}
\newcommand{\real}{\mathbb{R}}
\newcommand{\eu}{\ensuremath{\mathrm{e}}}
\newcommand{\wpone}{with probability one}
\newcommand{\inner}[3][]{( #2, #3 )_{#1}}
\DeclarePairedDelimiterX\innerp[3]{(}{)_{#1}}{#2,#3}
\newcommand{\pOmega}{\Omega}
\newcommand{\cF}{\mathcal{F}}
\newcommand{\cB}{\mathcal{B}}
\newcommand{\rdc}{\kappa}
\newcommand{\domain}{D}
\newcommand{\rrhs}{h}
\newcommand{\du}{\ensuremath{\mathrm{d}}}
\newcommand{\sblf}[2][\real]{\mathscr{L}(#2, #1)}
\newcommand{\embedding}{\xhookrightarrow{}}
\newcommand{\wrt}{with respect to}
\newcommand{\dom}[1]{\mathrm{dom}~{#1}}
\newcommand{\dualpHzeroone}[3][]
{\langle #2, #3 \rangle_{H^{-1}(#1), H_0^1(#1)}}
\newcommand{\dualp}[3][]{\langle #2, #3 \rangle_{{#1}^*\!, #1}}
\DeclarePairedDelimiterXPP\cE[1]{\mathbb{E}}[]{}{
	
	#1}
\DeclarePairedDelimiterXPP\Prob[1]{\mathrm{Prob}}(){}{
\newcommand{\MonteArg}[1]{\ifthenelse{\equal{#1}{}}  
	{}
	{[{#1}]}
}
\newcommand{\erpobj}{F}
\newcommand{\obj}{f}
\newcommand{\pobj}{J}
\newcommand{\rpobj}{\pobj}
\newcommand{\fun}{\mathsf{f}}
\newcommand{\gateaux}{G\^ateaux}
\newcommand{\Caratheodory}{Carath\'eodory}
\newcommand{\youngfun}[1]{\phi}
\newcommand{\luxemburg}[3][B]{\norm[L_{\youngfun{#2}}(\Omega; #1)]{#3}}
\newcommand{\dn}{64}
\newcommand{\nn}{256}
\title{Sample Average Approximations of Strongly Convex Stochastic Programs in Hilbert Spaces}
\date{May 21, 2022}
\author{Johannes Milz\thanks{Technical University of Munich,
		Department of Mathematics, Boltzmannstr.\ 3, 85748 Garching, Germany,
		\texttt{milz@ma.tum.de}. The
		project was funded by the Deutsche Forschungsgemeinschaft 
		(DFG, German Research Foundation)--project number 188264188/GRK1754.}}
\begin{document}
\maketitle
\begin{abstract}
  We analyze
  the tail behavior of  solutions to sample average approximations (SAAs)
  of stochastic programs posed in Hilbert spaces. 
  We require that the integrand be strongly convex
  with the same convexity parameter for each realization.
  Combined with a standard condition
  from the literature on stochastic programming, 
  we  establish non-asymptotic exponential tail bounds 
  for the distance between the SAA solutions and 
  the stochastic program's solution,
  without assuming compactness of the feasible set. 
  Our assumptions are verified on a class of 
  infinite-dimensional optimization problems 
  governed
  by affine-linear partial differential equations with random inputs.
  We present numerical results illustrating our theoretical findings.
\end{abstract}

\section{Introduction}
\label{sec:intro}

We apply the sample average approximation (SAA)
to a class of strongly convex stochastic programs posed
in Hilbert spaces, and study the tail behavior of the distance
between SAA solutions and their true counterparts. 
Our work sheds light on the number of samples needed
to reliably estimate solutions to infinite-dimensional, 
linear-quadratic optimal control problems
governed by affine-linear partial differential equations (PDEs)
with random inputs, a class of 
optimization problems that has received much attention
recently
\cite{Hoffhues2020,Marin2018,MartnezFrutos2018}.
Our analysis requires that the integrand
be strongly convex with the same convexity parameter
for each random  element's sample. 
This assumption is fulfilled
for convex optimal controls problems with a strongly convex
control regularizer, such as those considered in
\cite{Hoffhues2020,Marin2018,MartnezFrutos2018}.
Throughout the paper, 
a function $\mathsf{f} : \hsp \to \real \cup \{\infty\}$
is $\alpha$-strongly convex with parameter $\alpha  > 0$
if $\mathsf{f}(\cdot)-(\alpha/2)\norm[\hsp]{\cdot}^2$
is convex,
where $\hsp$ is a real Hilbert space with norm $\norm[\hsp]{\cdot}$.
Moreover, a function on a real Hilbert space is strongly convex if it
is $\alpha$-strongly convex with some parameter $\alpha > 0$.

We consider the potentially infinite-dimensional
stochastic program
\begin{align}
\label{eq:saa:2020-01-15T21:11:01.962}
\min_{u \in \csp}\, 
\{\, \obj(u) = \cE{\rpobj(u, \xi)} + \Psi(u) \, \}, 
\end{align}
where $\csp$ is a real, separable Hilbert space, 
$\Psi : \csp \to \real \cup \{\infty\}$
is proper, lower-semicontinuous and convex,
and $\rpobj : \csp \times \Xi \to \real$ is
the integrand.
Moreover, $\xi$ is a random element
mapping from a probability space to 
a complete, separable metric space $\Xi$
equipped with its Borel $\sigma$-field.
We also use $\xi \in \Xi$ to represent a deterministic element.

Let $\xi^1$, $\xi^2$, \ldots be  independent  identically 
distributed $\Xi$-valued random elements
defined on a complete probability space $(\Omega, \cF, P)$
such that each $\xi^i$
has the same distribution as that of $\xi$.
The SAA problem corresponding to \eqref{eq:saa:2020-01-15T21:11:01.962} is
\begin{align}
\label{eq:saa:saalqcp}
\min_{u \in \csp }\, 
\Big\{\, 
\obj_N(u) =  
\frac{1}{N}\sum_{i=1}^N \rpobj(u, \xi^i) + \Psi(u)
\, \Big\},
\end{align}
We define $\erpobj: \csp \to \real \cup \{\infty\}$ 
and the sample average function
$\erpobj_N : \csp \to \real$
by 
\begin{align}
\label{eq:saa:erpobjN}
\erpobj(u) = \cE{\rpobj(u, \xi)}
\quad \tand \quad 
\erpobj_N(u) =
\frac{1}{N}\sum_{i=1}^N \rpobj(u, \xi^i).
\end{align}
Since we assume
that the random elements
$\xi^1, \xi^2, \ldots$
are defined on the common 
probability space $(\Omega, \cF, P)$,
we can view the functions $\obj_N$ and $\erpobj_N$
as defined on $\csp \times \Omega$
and the solution $u_N^*$ to \eqref{eq:saa:saalqcp} 
as a mapping from $\Omega$ to $\csp$.
The second argument of $\obj_N$ and of $\erpobj_N$ is often dropped.

Let $u^*$ be a solution to \eqref{eq:saa:2020-01-15T21:11:01.962}
and $u_N^*$ be a solution to \eqref{eq:saa:saalqcp}.
We assume that $\rpobj(\cdot, \xi)$ is $\alpha$-strongly convex
with parameter $\alpha > 0$ for each $\xi \in \Xi$.
Furthermore, we assume that
$\erpobj(\cdot)$ and
$\rpobj(\cdot, \xi)$ 
for all $\xi \in \Xi$
are \gateaux\ differentiable.
Under these assumptions, we establish
the error estimate
\begin{align}
\label{eq:intro:2020-01-25T19:33:12.799}
\alpha \norm[\csp]{u^*-u_N^*}
\leq \cnorm{\csp}{\nabla \erpobj_N(u^*)- \nabla \erpobj(u^*)},
\end{align}
valid \wpone.
If $\norm[\csp]{\nabla_u\rpobj(u^*, \xi)}$
is integrable, then 
$\nabla \erpobj_N(u^*)$ is just
the empirical mean of $\nabla \erpobj(u^*)$
since $\erpobj(\cdot)$ and $\rpobj(\cdot, \xi)$
for all $\xi \in \Xi$ are convex and
\gateaux\ differentiable at $u^*$; see  \Cref{lem:saa:2020-04-01T18:25:27.402}.
Hence we can analyze the mean square error
$\cE{\norm[\csp]{u^*-u_N^*}^2}$ and the 
exponential tail behavior of $\norm[\csp]{u^*-u_N^*}$
using standard conditions from the
literature on stochastic programming.
To obtain a bound on $\cE{\norm[\csp]{u^*-u_N^*}^2}$, we assume that
there exists $\sigma > 0$ with
\begin{align}
\label{eq:sigma}
\cE{\norm[\csp]{\nabla_u \rpobj(u^*, \xi)-
		\nabla \erpobj(u^*)}^2} \leq \sigma^2,
\end{align}
yielding with \eqref{eq:intro:2020-01-25T19:33:12.799} the bound 
\begin{align}
\label{eq:meansquarederror}
\cE{\norm[\csp]{u^*-u_N^*}^2} \leq \sigma^2/(\alpha^2 N).
\end{align}
To derive exponential tail bounds on $\norm[\csp]{u^*-u_N^*}$, 
we further assume the existence of  $\tau > 0$ with
\begin{align}
\label{eq:exponentialsquareintegrable}
\cE{\exp 
	(
	\tau^{-2}\norm[\csp]{\nabla_u \rpobj(u^*, \xi)-
		\nabla\erpobj(u^*)}^2
	)
}
\leq \eu.
\end{align}
This condition and its variants 
are used, for example,  in
\cite{Duchi2012,Guigues2017,Nemirovski2009}.
Using Jensen's inequality, we find that \eqref{eq:exponentialsquareintegrable} 
implies \eqref{eq:sigma} with $\sigma^2 = \tau^2$
\cite[p.\ 1584]{Nemirovski2009}.
Combining \eqref{eq:intro:2020-01-25T19:33:12.799} and
\eqref{eq:exponentialsquareintegrable}
with the exponential moment 
inequality proven in \cite[Thm.\ 3]{Pinelis1986},
we establish the exponential tail bound, our
main contribution,
\begin{align}
\label{eq:exponentialtailbound}
\Prob{\norm[\csp]{u^*-u_N^*} \geq \varepsilon}
\leq 
2\exp(-\tau^{-2}N \varepsilon^2 \alpha^2/3)
\quad \tfa \quad \varepsilon > 0.
\end{align} 
This bound solely depends on the characteristics
of $\rpobj$ but not on properties of
the feasible set,
$\{\, u \in \csp \colon \, \Psi(u) < \infty \, \}$,
other than its convexity.
For each $\delta \in (0,1)$, 
the exponential tail bound yields,
with a probability of at least $1-\delta$,
\begin{align}
\label{eq:intro:exponentialtailbound}
\norm[\csp]{u^*-u_N^*} <
\frac{\tau}{\alpha}\sqrt{\frac{3\ln(2/\delta)}{N}}.
\end{align}
In particular,
if $\varepsilon > 0$
and $N \geq \tfrac{3\tau^2}{\alpha^2\varepsilon^2}\ln(2/\delta)$, 
then $\norm[\csp]{u^*-u_N^*} < \varepsilon$
with a probability of at least $1-\delta$, 
that is, $u^*$ can be estimated reliably
via $u_N^*$.

Requiring  $\rpobj(\cdot, \xi)$ to be $\alpha$-strongly convex
for each $\xi \in \Xi$ is a restrictive assumption. However, it
is fulfilled for the following class of stochastic programs:
\begin{align}
\label{eq:intro:lqcp}
\min_{u \in \csp} \, 
\{\, 
(1/2)\cE{\norm[\hsp]{K(\xi)u+h(\xi)}^2} +
(\alpha/2)\norm[\csp]{u}^2
+ \Psi(u)
\, \},
\end{align}
where $\alpha > 0$, $\hsp$ and $\csp$ are real Hilbert spaces, and
$K(\xi) : \csp \to \hsp$ is a bounded, linear operator and $h(\xi) \in \hsp$
for each $\xi \in \Xi$.
The control problems
governed by affine-linear PDEs with random inputs
considered, for example, in
\cite{Ge2018,Geiersbach2019a,Hoffhues2020,Martin2021,%
	MartnezFrutos2018}
can be formulated as instances of \eqref{eq:intro:lqcp}.
In many of  these works, the operator $K(\xi)$ is compact
for each $\xi \in \Xi$, 
the expectation function $\erpobj_1 : \csp \to \real$
defined by
$\erpobj_1(u) = (1/2)\cE{\norm[\hsp]{K(\xi)u+h(\xi)}^2}$
is twice continuously differentiable,
and $\csp$ is infinite-dimensional.
In this case, the function $\erpobj_1$
generally lacks  strong convexity.
This may suggest that the $\alpha$-strong convexity of the
objective function of \eqref{eq:intro:lqcp} is
solely implied by the function
$(\alpha/2)\norm[\csp]{\cdot}^2 + \Psi(\cdot)$.
The lack of the expectation function's strong convexity
is essentially known \cite[p.\ 3]{Bach2011}. For example, if
the set $\Xi$ has finite cardinality, then
the Hessian $\nabla^2 \erpobj_1(0)$
is the finite sum of compact operators and 
hence $\erpobj_1$ lacks strong convexity; see
\cref{subsec:saa:lqocu}.

A common notion used to analyze 
the SAA solutions is that of an
$\varepsilon$-optimal solution
\cite{Shapiro2003,Shapiro2014,Shapiro2005}.\footnote{%
	A point $\bar x \in X$ is an $\varepsilon$-optimal solution 
	to $\inf_{x \in X}\, \fun(x)$ if
	$\fun(\bar x) \leq \inf_{x \in X}\, \fun(x)+\varepsilon$.
} 
We instead study the tail behavior of $\norm[\csp]{u^*-u_N^*}$
since in the literature on PDE-constrained optimization the focus is on
studying the proximity of approximate solutions to the 
``true'' ones.
For example, 
when analyzing finite element approximations of PDE-constrained problems,
bounds on the error $\norm[\csp]{w^*-w_h^*}$ as functions of the
discretization parameters $h$ are often 
established \cite{Hinze2005a,Troeltzsch2010},
where  $w^*$ is the solution to
a control problem and
$w_h^*$ is the solution to
its finite element approximation.
The estimate \eqref{eq:intro:2020-01-25T19:33:12.799} 
is similar
to that established in
\cite[p.\ 49]{Hinze2005a}
for the variational discretization---a finite element approximation---of a 
deterministic, linear-quadratic control problem. 
Since both the variational discretization and the
SAA approach yield perturbed optimization problems, 
it is unsurprising that
similar techniques can be used 
for some parts of the perturbation analysis.

The SAA approach has  thoroughly  been analyzed, for example, in 
\cite{Artstein1995,Banholzer2019,Royset2019,Shapiro2003,%
	Shapiro2005,Shapiro2014}. 
Some consistency results
for the SAA solutions and  finite-sample size estimates 
require the compactness and total boundedness of the
feasible set, respectively. However, 
in the literature on PDE-constrained optimization, 
the feasible sets are commonly noncompact; 
see, e.g., \cite[Sect.\ 1.7.2.3]{Hinze2009}.
Assuming that the function $\erpobj$ defined in
\eqref{eq:saa:erpobjN} is $\alpha$-strongly convex with $\alpha > 0$, 
Kouri and Shapiro~\cite[eq.\ (42)]{Kouri2018} establish
\begin{align}
\label{eq:saa:2020-10-23T09:36:43.358}
\alpha \norm[\csp]{u^*-u_N^*}
\leq \cnorm{\csp}{\nabla \erpobj_N(u_N^*)- \nabla \erpobj(u_N^*)}.
\end{align}
The setting in \cite{Kouri2018} corresponds to 
$\Psi$ being the indicator function of 
a closed, convex, 
nonempty subset of $\csp$.
In contrast to  the estimate \eqref{eq:intro:2020-01-25T19:33:12.799}, 
the right-hand side in \eqref{eq:saa:2020-10-23T09:36:43.358} depends
on the random control $u_N^*$. This dependence implies that
the right-hand side in \eqref{eq:saa:2020-10-23T09:36:43.358}
is more  difficult to analyze than that 
in \eqref{eq:intro:2020-01-25T19:33:12.799}.
However, the convexity assumption on $\erpobj$ made
in \cite{Kouri2018} is weaker than ours
which requires the function $\rpobj(\cdot, \xi)$  be $\alpha$-strongly convex
for all $\xi \in \Xi$. 
The right-hand side \eqref{eq:saa:2020-10-23T09:36:43.358}
may be analyzed using the approaches developed in 
\cite[Sects.\ 2 and 4]{Shapiro1993}.

For finite-dimensional optimization problems, 
the number of samples, required to obtain 
$\varepsilon$-optimal solutions via the SAA approach, 
can explicitly depend on the problem's dimension
\cite{Agarwal2009},
\cite[Ex.\ 1]{Shapiro2008}, 
\cite[Prop.\ 2]{Guigues2017}.
Guigues, Juditsky, and Nemirovski~\cite{Guigues2017}
demonstrate that confidence bounds on the 
optimal value of stochastic, convex, finite-dimensional programs,
constructed via SAA optimal values,
do not explicitly depend on the problem's dimension. 
This property is shared by our exponential tail bound.

After the initial version of the manuscript was submitted, 
we became aware of the papers \cite{Tsybakov1981,Shalev-Shwartz2010}
where assumptions similar to those used to derive 
\eqref{eq:meansquarederror} and
\eqref{eq:exponentialtailbound}
are utilized to analyze the 
reliability of SAA solutions. 
For unconstrained minimization in $\real^n$
with $\Psi = 0$, tail bounds 
for $\norm[2]{u^*-u_N^*}$ are established in \cite{Tsybakov1981} 
under the assumption
that $\rpobj(\cdot, \xi)$ 
is  $\alpha$-strong convex for  all $\xi \in \Xi$ 
and some $\alpha > 0$.
Here, $\norm[2]{\cdot}$ is the Euclidean norm on $\real^n$.
Assuming further that  $\norm[2]{\nabla_u\rpobj(u^*,\xi)}$
is essentially bounded by $L > 0$, the author
establishes
\begin{align}
\label{eq:tsy}
\Prob{\norm[2]{u^*-u_N^*} \geq \varepsilon}
\leq 
2 \exp\Big(-\tfrac{N\alpha^2\varepsilon^2}{2L^2}
\big(1+\tfrac{\alpha\varepsilon}{3L}\big)^{-1}
\Big)
\end{align}
if $\varepsilon \in (0,L/\alpha]$, 
and the right-hand side in \eqref{eq:tsy} is zero otherwise
\cite[Cor.\ 2]{Tsybakov1981}.
While \eqref{eq:tsy} is similar to \eqref{eq:exponentialtailbound}
with $\tau = L$, 
its derivation exploits the essential boundedness of
$\norm[2]{\nabla_u\rpobj(u^*,\xi)}$
which is generally more restrictive than
\eqref{eq:exponentialsquareintegrable}.
The author establishes further tail bounds 
for $\norm[2]{u^*-u_N^*}$
under different sets of assumptions on $\rpobj(\cdot, \xi)$,
and provides exponential tail bounds for
$\obj(u_N^*) - \obj(u^*)$ 
assuming that $\rpobj(\cdot,\xi)$
is Lipschitz continuous with a Lipschitz constant
independent of $\xi$ (see \cite[Thm.\ 5]{Tsybakov1981}).
For the possibly infinite-dimensional program 
\eqref{eq:saa:2020-01-15T21:11:01.962},
similar assumptions are used in 
\cite[Thm.\ 2]{Shalev-Shwartz2010} to establish
a non-exponential tail bound for
$\obj(u_N^*) - \obj(u^*)$.
While tail bounds for $\obj(u_N^*) - \obj(u^*)$ 
are derived in \cite{Tsybakov1981,Shalev-Shwartz2010}, the assumptions
used to derive 
\eqref{eq:meansquarederror} and
\eqref{eq:exponentialtailbound}  do not imply bounds on
$\obj(u_N^*) - \obj(u^*)$.

Hoffhues, R{\"o}misch, and Surowiec~\cite{Hoffhues2020}
provide qualitative and quantitative stability results for the optimal
value and for the optimal solutions
of stochastic, linear-quadratic optimization problems posed in Hilbert spaces, 
similar to those in \eqref{eq:intro:lqcp}, 
\wrt\ Fortet--Mourier and Wasserstein metrics.
These stability results are valid for 
approximating probability measures other than the empirical one,
which is used to define the SAA problem \eqref{eq:saa:saalqcp}. 
However, the convergence rate $1/N$ for 
$\cE{\norm[\csp]{u^*-u_N^*}^2}$,
and  exponential tail bounds on $\norm[\csp]{u^*-u_N^*}$
are not established in \cite{Hoffhues2020}. 
For a class of constrained, linear 
elliptic control problems, R{\"o}misch and Surowiec~\cite{Roemisch2021}
demonstrate the consistency of the solutions
and the  optimal value, the convergence rate $1/\sqrt{N}$  for 
$\cE{\norm[\csp]{u^*-u_N^*}}$ 
and for $\cE{|\obj_N(u_N^*)-\obj(u^*)|}$, 
and the convergence in distribution of 
$\sqrt{N}(\obj_N(u_N^*)-\obj(u^*))$ to a real-valued random variable.
These results are established using empirical process theory 
and are built on smoothness of the random elliptic operator and 
right-hand side with respect to the parameters.
While our assumptions yield the mean square error bound
\eqref{eq:meansquarederror} and the exponential tail bound 
\eqref{eq:exponentialtailbound}, 
further conditions may be required to establish bounds on
$\cE{|\obj_N(u_N^*)-\obj(u^*)|}$.
A bound on $\cE{\norm[\csp]{u^*-u_N^*}^2}$ 
related to \eqref{eq:meansquarederror}
is established in \cite[Thm.\ 4.1]{Martin2021}
for class of linear elliptic control problems.

Besides considering risk-neutral, convex control problems
with PDEs which can be expressed as those in \cref{subsec:saa:lqocu}, 
the authors of \cite{Marin2018,MartnezFrutos2018}
study the minimization of
$u \mapsto \Prob{\rpobj(u,\xi) \geq \rho}$,
where $\rho \in \real$
and evaluating $\rpobj(u,\xi)$ requires solving a PDE\@.
Furthermore, the authors of \cite{Marin2018,MartnezFrutos2018}
prove the existence of
solutions and use stochastic collocation to discretize
the expected values. 
In \cite[Sect.\ 5.3]{MartnezFrutos2018}, the authors adaptively  combine 
a Monte Carlo 
sampling approach with a stochastic Galerkin finite element method 
to reduce the computational costs, but
error bounds are not established.
Stochastic collocation is also used, for example, in \cite{Ge2018,Kouri2013}.
Further approaches to discretize the expected value in 
\eqref{eq:intro:lqcp} are, for example,
quasi-Monte Carlo sampling \cite{Guth2019} and 
low-rank tensor approximations
\cite{Garreis2017}. A solution method for
\eqref{eq:saa:2020-01-15T21:11:01.962}
is (robust) stochastic approximation. It has
thoroughly been analyzed 
in \cite{Lan2020,Nemirovski2009} for  finite-dimensional
and in \cite{Geiersbach2019a,Geiersbach2019,Nemirovsky1983}
for infinite-dimensional optimization problems.
For reliable $\varepsilon$-optimal solutions, 
the sample size estimates established in 
\cite[Prop.\ 2.2]{Nemirovski2009} do not explicitly
depend on the problem's dimension.

After providing some notation and preliminaries
in \cref{sec:notation}, we establish exponential tail bounds
for Hilbert space-valued random sums in 
\cref{sec:tailbounds}. Combined 
with optimality conditions and the
integrand's $\alpha$-strong convexity,
we establish exponential tail and mean square error bounds for SAA solutions
in \cref{sec:tailboundssaa}.
\cref{sec:optimalitytailbounds} demonstrates the optimality
of the  tail bounds. 
We apply our findings to linear-quadratic control
under uncertainty in \cref{subsec:saa:lqocu}, 
and identify a problem class that violates
the  integrability condition
\eqref{eq:exponentialsquareintegrable}.
Numerical results are presented in \cref{sec:numresults}.
In  \cref{sec:discussion}, we illustrate that the
``dynamics'' of finite- and infinite-dimensional
stochastic programs can be quite different.

\section{Notation and Preliminaries}
\label{sec:notation}

Throughout the manuscript, we assume the existence
of solutions to \eqref{eq:saa:2020-01-15T21:11:01.962} and to
\eqref{eq:saa:saalqcp}. We refer the reader to
\cite[Prop.\ 3.12]{Kouri2018a}
and \cite[Thm.\ 1]{Hoffhues2020}
for theorems on the existence of solutions
to infinite-dimensional stochastic programs.

The set $\dom{\Psi} = \{\, u \in \csp \colon \,
\Psi(u) < \infty \,\}$ is the domain of $\Psi$.
The indicator function $I_{\csp_0} : \csp \to \real \cup\{\infty\}$
of a nonempty set $\csp_0 \subset \csp$ is defined by
$I_{\csp_0}(u) = 0$ if $u \in \csp_0$ and 
$I_{\csp_0}(u) =  \infty$ otherwise.
Let $(\hat{\Omega}, \hat{\cF}, \hat{P})$ be a probability space.
A Banach space $\nnbsp$ 
is equipped with its Borel $\sigma$-field
$\cB(\nnbsp)$.
We denote by $\inner[\hsp]{\cdot}{\cdot}$
the inner product of a real Hilbert space $\hsp$
equipped with the norm $\norm[\hsp]{\cdot}$
given by 
$\norm[\hsp]{v}  = \sqrt{\inner[\hsp]{v}{v}}$
for all $v \in \hsp$.
For a real, separable Hilbert space $\hsp$,
$\eta : \hat{\Omega} \to \hsp$ is a mean-zero Gaussian
random vector if $\inner[\hsp]{v}{\eta}$ is a mean-zero
Gaussian random variable for each $v \in \hsp$
\cite[pp.\ 58--59]{Yurinsky1995}.
For  a metric space $\nbsp$,
a mapping $\fun : \nbsp \times \hat{\Omega} \to \nnbsp$
is a \Caratheodory\ mapping
if $\fun(\cdot, \omega)$ is continuous for every 
$\omega \in \hat{\Omega}$ and
$\fun(x, \cdot)$ is
$\hat{\cF}$-$\cB(\nnbsp)$-measurable 
for all $x \in \nbsp$.

For two Banach spaces $\nbsp$ and $\nnbsp$,
$\sblf[\nnbsp]{\nbsp}$ is the space of bounded, linear
operators from $\nbsp$ to $\nnbsp$, 
and $\nbsp^* = \sblf{\nbsp}$.
We denote by $\dualp[\nbsp]{\cdot}{\cdot}$ the dual
pairing of $\nbsp^*$ and $\nbsp$.
A function $\upsilon: \hat{\Omega} \to \nnbsp$ is strongly measurable
if there exists a sequence of simple functions 
$\upsilon_k : \hat{\Omega}  \to \nnbsp$ such that
$\upsilon_k(\omega) \to \upsilon(\omega)$ as $k\to \infty$
for all $\omega \in \hat{\Omega}$ \cite[Def.\ 1.1.4]{Hytoenen2016}.	
An 
operator-valued function
$\Upsilon : \hat{\Omega} \to \sblf[\nnbsp]{\nbsp}$
is strongly measurable if the function
$\omega \mapsto \Upsilon(\omega)x$ is strongly measurable
for each $x \in \nbsp$ \cite[Def.\ 1.1.27]{Hytoenen2016}.
Moreover, an operator-valued function
$\Upsilon : \hat{\Omega} \to 
\sblf[\nnbsp]{\nbsp}$
is uniformly measurable if 
there exists a sequence of simple operator-valued
functions $\Upsilon_k : 
\hat{\Omega} \to \sblf[\nnbsp]{\nbsp}$
with $\Upsilon_k(\omega) \to \Upsilon(\omega)$ as $k \to \infty$
for all $\omega\in\hat{\Omega}$.
An operator 
$K \in \sblf[\nnbsp]{\nbsp}$ is compact
if the closure of $K(\nbsp_0)$ is compact for each
bounded set $\nbsp_0 \subset \nbsp$.
For two real Hilbert spaces $\hsp_1$ and $\hsp_2$,  
$K^* \in \sblf[\hsp_1]{\hsp_2}$ 
is the (Hilbert space-)adjoint operator
of $K \in \sblf[\hsp_2]{\hsp_1}$ and is defined by 
$\inner[\hsp_2]{Kv_1}{v_2} =
\inner[\hsp_1]{v_1}{K^*v_2}$
for all $v_1 \in \hsp_1$ and $v_2 \in \hsp_2$
\cite[Def.\ 3.9.1]{Kreyszig1978}.
For a bounded domain $\domain \subset \real^d$, 
$L^2(\domain)$ is the Lebesgue space of square-integrable functions
and $L^1(\domain)$ is that of integrable functions.
The Hilbert space $H_0^1(\domain)$
consists of all $v \in L^2(\domain)$
with weak derivatives in $L^2(\domain)^d$
and with zero boundary traces. 
We define $H^{-1}(\domain) = H_0^1(\domain)^*$.

\section{Exponential Tail Bounds for Hilbert Space-Valued Random Sums}
\label{sec:tailbounds}

We establish two exponential tail bounds
for Hilbert space-valued random sums which are 
direct consequences of known results
\cite{Pinelis1994,Pinelis1986}.
Below, $(\Theta, \Sigma, \mu)$
denotes a  probability space.
Proofs are presented at the end
of the section.

\begin{Theorem}
	\label{thm:saa:bOgutQ6IqE}
	Let $\hsp$ be a real, separable Hilbert space.
	Suppose that $\rv_i : \Theta\to \hsp$
	for $i = 1, 2, \ldots$ are independent, mean-zero random 
	variables such that
	$\cE{\exp(\tau^{-2}\norm[\hsp]{\rv_i}^2)}\leq\eu$ 
	for some $\tau > 0$. Then, for each
	$N \in \natural$, $\varepsilon \geq 0$, 
	\begin{align}
	\label{eq:saa:bOgutQ6IqE_1}
	\Prob{
		\norm[\hsp]{\rv_1 + \cdots + \rv_N}
		\geq N\varepsilon
	}
	\leq 
	2\exp(-\tau^{-2}\varepsilon^2N/3).
	\end{align}
\end{Theorem}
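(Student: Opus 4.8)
The plan is to pass through the moment generating function of $\norm[\hsp]{\randomsum_N}$, where $\randomsum_N = \rv_1 + \cdots + \rv_N$, and to invoke the exponential moment inequality for Hilbert space-valued sums of independent mean-zero random variables from \cite[Thm.\ 3]{Pinelis1986} (cf.\ \cite{Pinelis1994}). The mechanism of that inequality is to lift directional (scalar) sub-Gaussian bounds on the increments to a Gaussian-type bound on $\norm[\hsp]{\randomsum_N}$: if every projection $\inner[\hsp]{v}{\rv_i}$ with $\norm[\hsp]{v} \leq 1$ satisfies $\cE{\exp(\lambda \inner[\hsp]{v}{\rv_i})} \leq \exp(\gamma_i^2\lambda^2/2)$, then $\cE{\exp(\lambda\norm[\hsp]{\randomsum_N})} \leq 2\exp(\tfrac{\lambda^2}{2}\sum_{i=1}^N\gamma_i^2)$ for all $\lambda > 0$. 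Separability of $\hsp$ and independence with the mean-zero property (so that $(\randomsum_n)_n$ is an $\hsp$-valued martingale) place us in the setting of that theorem.

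First I would produce the directional variance proxy $\gamma_i^2$ from the Orlicz-type hypothesis $\cE{\exp(\tau^{-2}\norm[\hsp]{\rv_i}^2)} \leq \eu$. Since $|\inner[\hsp]{v}{\rv_i}| \leq \norm[\hsp]{\rv_i}$ for $\norm[\hsp]{v} \leq 1$ and $\inner[\hsp]{v}{\rv_i}$ is mean-zero, a Young's inequality split of $\lambda\inner[\hsp]{v}{\rv_i}$ together with the moment bounds $\cE{\norm[\hsp]{\rv_i}^{2k}} \leq \eu\,k!\,\tau^{2k}$ (each summand of the series for $\cE{\exp(\tau^{-2}\norm[\hsp]{\rv_i}^2)}$ is nonnegative and hence $\leq \eu$) yields a scalar sub-Gaussian bound with $\gamma_i^2 \leq \tfrac32\tau^2$; crucially, the mean-zero property is what removes any multiplicative constant here. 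With the common value forced by the single parameter $\tau$, Pinelis's inequality gives $\cE{\exp(\lambda\norm[\hsp]{\randomsum_N})} \leq 2\exp(\tfrac34 N\tau^2\lambda^2)$, and Markov's inequality followed by optimization over $\lambda$ at $\lambda = 2\varepsilon/(3\tau^2)$ gives, for $\varepsilon > 0$,
\[
\Prob{\norm[\hsp]{\randomsum_N} \geq N\varepsilon} \leq \eu^{-\lambda N\varepsilon}\,\cE{\exp(\lambda\norm[\hsp]{\randomsum_N})} \leq 2\exp\bigl(\tfrac34 N\tau^2\lambda^2 - \lambda N\varepsilon\bigr) = 2\exp\bigl(-\tfrac{N\varepsilon^2}{3\tau^2}\bigr),
\]
which is precisely \eqref{eq:saa:bOgutQ6IqE_1}; the case $\varepsilon = 0$ is trivial since the right-hand side is $2 \geq 1$.

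I expect the main obstacle to be obtaining the numerical constant $1/3$ exactly rather than some larger value. The subtlety is that a naive product of per-increment exponential moment bounds would introduce a spurious factor (of order $\eu^{N}$) coming from the non-centered quantity $\norm[\hsp]{\rv_i}$; the Gaussian tail with only the additive term $\tfrac34 N\tau^2\lambda^2$ in the exponent is available only because the directional centering and the symmetrization built into Pinelis's inequality suppress that factor. Landing exactly on $1/3$ therefore depends on using the sharp form of Pinelis's bound, on the Young's-inequality constant above, and on checking that the optimizer $\lambda = 2\varepsilon/(3\tau^2)$ lies in the range where the moment bound is valid. The remaining items — verifying that independence and mean-zero yield the martingale hypotheses and that separability supplies the measurability underlying the inequality — are routine.
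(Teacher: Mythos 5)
Your skeleton---Markov/Chernoff bound, Pinelis's exponential-moment inequality, optimization at $\lambda = 2\varepsilon/(3\tau^2)$---is the same as the paper's, and your closing arithmetic is correct; but the step on which everything rests is unsound. The statement you attribute to \cite[Thm.\ 3]{Pinelis1986}, namely that directional sub-Gaussian bounds $\cE{\exp(\lambda\inner[\hsp]{v}{\rv_i})}\leq\exp(\gamma_i^2\lambda^2/2)$ for all $\norm[\hsp]{v}\leq 1$ imply $\cE{\exp(\lambda\norm[\hsp]{\randomsum_N})}\leq 2\exp(\tfrac{\lambda^2}{2}\sum_{i}\gamma_i^2)$, is not that theorem, and it is false in infinite dimensions with dimension-free constants. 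Take $N=1$ and $\rv_1$ a mean-zero Gaussian vector in $\ell^2(\natural)$ whose covariance operator has $n^2$ eigenvalues equal to $1/n$ and the rest zero: every unit-direction projection is Gaussian with variance at most $1/n$, so one may take $\gamma_1^2 = 1/n$, yet $\norm[\ell^2(\natural)]{\rv_1}^2$ is a normalized sum of $n^2$ chi-squares and concentrates near $n$, so $\cE{\exp(\lambda\norm[\ell^2(\natural)]{\rv_1})}\geq\tfrac12\exp(\lambda\sqrt{n}/2)$ for large $n$, which at $\lambda=n$ dwarfs the claimed bound $2\exp(\lambda^2/(2n)) = 2\exp(n/2)$. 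Weak (directional) sub-Gaussianity sees only the operator norm of the covariance, while the norm of a Hilbert-space-valued sum scales with its trace; this is precisely why the hypothesis of the theorem is an Orlicz-type condition on $\norm[\hsp]{\rv_i}$ itself and cannot be replaced by a condition on projections.

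The repair is the paper's actual route. Pinelis's inequality (\Cref{thm:saa:2020-03-15T00:12:53.514}) reads $\cE{\cosh(\lambda\norm[\hsp]{\randomsum_N})}\leq\prod_{i=1}^N\cE{\exp(\lambda\norm[\hsp]{\rv_i})-\lambda\norm[\hsp]{\rv_i}}$: the factors involve the norms of the increments, not their projections, and the mean-zero assumption on the vectors $\rv_i$ enters only as a hypothesis of this martingale-type inequality, not as a device for centering anything. The hypothesis $\cE{\exp(\tau^{-2}\norm[\hsp]{\rv_i}^2)}\leq\eu$ must therefore be converted into the bound $\cE{\exp(\lambda\norm[\hsp]{\rv_i})-\lambda\norm[\hsp]{\rv_i}}\leq\exp(3\lambda^2\tau^2/4)$, which is exactly \Cref{lem:saa:2020-01-21T19:41:52.108}; your Young's-inequality-plus-moment-expansion computation is essentially the proof of that lemma, but it must be run on the nonnegative variable $\norm[\hsp]{\rv_i}$, not on projections. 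Note that this bound needs no centering at all: the per-factor multiplicative constant you rightly worry about is suppressed by the subtracted linear term inside Pinelis's product (for small $s$ one has $\eu^{s}-s = 1+O(s^2)$), not by ``directional centering.'' With these two substitutions, and $\exp\leq 2\cosh$ to pass from the $\cosh$ bound back to the moment generating function, your Markov step and the optimization at $\lambda = 2\varepsilon/(3\tau^2)$ go through verbatim and give \eqref{eq:saa:bOgutQ6IqE_1}.
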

If in addition 
$\norm[\hsp]{\rv_i} \leq \tau$
\wpone\ for $i = 1, 2, \ldots$, then
the upper bound in \eqref{eq:saa:bOgutQ6IqE_1} improves to
$2\exp(-\tau^{-2}\varepsilon^2N/2)$
\cite[Thm.\ 3.5]{Pinelis1994}.	

As an alternative to 
the condition $\cE{\exp(\tau^{-2}\norm[\hsp]{\rv}^2)}\leq\eu$ 
used in \Cref{thm:saa:bOgutQ6IqE}
for $\tau > 0$ and 
a random vector $\rv : \Theta\to \hsp$, we can 
express sub-Gaussianity 
with $\cE{\cosh(\lambda \norm[\hsp]{\rv})}\leq \exp(\lambda^2\sigma^2/2)$ 
for  all $\lambda \in \real$ and some $\sigma > 0$.
While these two conditions are equivalent up to
problem-independent constants
(see the proof of \cite[Lem.\ 1.6 on p.\ 9]{Buldygin2000}
and \Cref{lem:saa:2020-01-21T19:41:52.108}),
the constant $\sigma$ can be smaller than $\tau$.
For example, if $\rv : \Theta\to \hsp$
is a $\hsp$-valued, mean-zero Gaussian random vector, then 
the latter condition holds with $\sigma^2 = \cE{\norm[\hsp]{\rv}^2}$
\cite[Rem.\ 4]{Pinelis1986}.
However,  if $\hsp = \real$ then
$	\tau^2 = 2\sigma^2/(1-\exp(-2)) 
\href{https://tinyurl.com/yymabckc}
{\approx 2.31}\sigma^2
$ 
\cite[p.\ 9]{Buldygin2000}.

\begin{proposition}
	\label{prop:saa:2020-11-21T20:25:01.71}
	Let $\hsp$ be a real, separable Hilbert space, and let
	$\rv_i : \Theta\to \hsp$
	be independent, mean-zero random 
	vectors such that
	$\cE{\cosh(\lambda \norm[\hsp]{\rv_i})}\leq \exp(\lambda^2\sigma^2/2)$ 
	for all $\lambda \in \real$ and
	some $\sigma > 0$  ($i = 1, 2, \ldots$). Then, for each
	$N \in \natural$, $\varepsilon \geq 0$, 
	$$
	\Prob{
		\norm[\hsp]{\rv_1 + \cdots + \rv_N}
		\geq N\varepsilon
	}
	\leq 
	2\exp(-\sigma^{-2}\varepsilon^2N/3).
	$$
\end{proposition}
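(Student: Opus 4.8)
The plan is to apply Pinelis's exponential moment inequality directly, since the hypothesis $\cE{\cosh(\lambda \norm[\hsp]{\rv_i})} \leq \exp(\lambda^2\sigma^2/2)$ is exactly the sub-Gaussian-type condition under which that inequality is formulated, and then to convert the moment bound into a tail bound by the same Chernoff argument that underlies \Cref{thm:saa:bOgutQ6IqE}. I would deliberately \emph{not} deduce the statement from \Cref{thm:saa:bOgutQ6IqE} by first turning the $\cosh$-condition into $\cE{\exp(\tau^{-2}\norm[\hsp]{\rv_i}^2)} \leq \eu$: as the preceding discussion shows (with $\tau^2 \approx 2.31\,\sigma^2$ already in the scalar case), this conversion forces $\tau > \sigma$, so \Cref{thm:saa:bOgutQ6IqE} would only give the weaker bound $2\exp(-\tau^{-2}\varepsilon^2 N/3)$. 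Keeping the sharp parameter $\sigma$ requires working with the $\cosh$-condition from the outset.

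First I would set $\randomsum_N = \rv_1 + \cdots + \rv_N$ and invoke \cite[Thm.\ 3]{Pinelis1986} for the separable Hilbert space $\hsp$. Because the $\rv_i$ are independent and mean-zero and $\norm[\hsp]{\cdot}$ is $2$-smooth, that result controls the exponential moments of $\norm[\hsp]{\randomsum_N}$ through the per-summand $\cosh$-bounds, delivering a sub-Gaussian-type estimate whose proxy variance is proportional to $N\sigma^2$. Second, I would pass to a tail bound by Markov's inequality: since $\cosh$ is even and nondecreasing on $[0,\infty)$ and $\norm[\hsp]{\randomsum_N} \geq 0$, for every $\lambda > 0$ one has $\Prob{\norm[\hsp]{\randomsum_N} \geq N\varepsilon} \leq \cE{\cosh(\lambda\norm[\hsp]{\randomsum_N})}/\cosh(\lambda N\varepsilon) \leq 2\,\eu^{-\lambda N\varepsilon}\,\cE{\cosh(\lambda\norm[\hsp]{\randomsum_N})}$, where I used $\cosh(x) \geq \tfrac{1}{2}\eu^{x}$. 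Third, I would substitute the moment bound from the first step and minimize the exponent over $\lambda > 0$; the optimal $\lambda$ is of order $\varepsilon/\sigma^2$, and because the moment estimate from \cite[Thm.\ 3]{Pinelis1986} in the general infinite-dimensional setting is slightly weaker than the clean Gaussian one, the minimization yields the constant $\tfrac{1}{3}$ rather than $\tfrac{1}{2}$, producing $2\exp(-\sigma^{-2}\varepsilon^2 N/3)$. The case $\varepsilon = 0$ is immediate since the right-hand side is then at most $2$.

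The main obstacle is the first step, the exponential moment bound for the $\hsp$-valued sum. This is genuinely infinite-dimensional: $\norm[\hsp]{\randomsum_N}$ is the norm of a vector, not a sum of scalars, so the naive product bound $\cE{\cosh(\lambda\norm[\hsp]{\randomsum_N})} \leq \prod_i \cE{\cosh(\lambda\norm[\hsp]{\rv_i})}$ fails for asymmetric increments (even in the scalar case it fails, by a $\sinh$-correction term, which is also why the constant degrades from $\tfrac{1}{2}$ to $\tfrac{1}{3}$). Controlling the moments therefore rests on the $2$-smoothness, i.e.\ the parallelogram structure, of $\norm[\hsp]{\cdot}$ through Pinelis's martingale machinery, which is precisely what \cite[Thm.\ 3]{Pinelis1986} supplies; once that estimate is available, the remaining two steps are the routine Chernoff optimization.
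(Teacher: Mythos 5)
Your overall route---keep the $\cosh$-hypothesis so as not to lose the sharp parameter $\sigma$, feed it into Pinelis's exponential moment inequality \cite[Thm.\ 3]{Pinelis1986}, then conclude by a Markov/Chernoff optimization---is exactly the paper's route, and your Markov step (via $\cosh(x)\geq \tfrac12 \eu^x$) and the final optimization over $\lambda$ are correct as written. The gap is in your first step. \Cref{thm:saa:2020-03-15T00:12:53.514} does \emph{not} control $\cE{\cosh(\lambda\norm[\hsp]{\randomsum_N})}$ ``through the per-summand $\cosh$-bounds'': its right-hand side is $\prod_{i=1}^N \cE{\exp(\lambda\norm[\hsp]{\rv_i}) - \lambda\norm[\hsp]{\rv_i}}$, so the hypothesis $\cE{\cosh(\lambda\norm[\hsp]{\rv_i})}\leq\exp(\lambda^2\sigma^2/2)$ cannot be substituted into it directly. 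You assert the outcome of this step (a moment bound with proxy variance proportional to $N\sigma^2$, and the constant $1/3$ after optimization) but never supply the device that produces it. The paper's entire proof is precisely this one missing line: the elementary inequality $\exp(s)-s \leq \cosh(s\sqrt{3/2})$ for all $s \in \real$, which, combined with the hypothesis evaluated at the dilated parameter $\lambda\sqrt{3/2}$ (legitimate, since the hypothesis holds for all $\lambda\in\real$), gives
\begin{align*}
\cE{\exp(\lambda\norm[\hsp]{\rv_i}) - \lambda\norm[\hsp]{\rv_i}}
\leq \cE{\cosh\bigl(\lambda\sqrt{3/2}\,\norm[\hsp]{\rv_i}\bigr)}
\leq \exp(3\lambda^2\sigma^2/4);
\end{align*}
after this, the proof of \Cref{thm:saa:bOgutQ6IqE} applies verbatim, and minimizing $-\lambda N\varepsilon + 3\lambda^2\sigma^2N/4$ over $\lambda>0$ yields the exponent $-\sigma^{-2}\varepsilon^2N/3$.

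Relatedly, your diagnosis of where the constant $1/3$ comes from is off. You attribute it to the moment estimate of \cite[Thm.\ 3]{Pinelis1986} being ``slightly weaker'' in the ``general infinite-dimensional setting''; but that inequality is dimension-free and is used identically in \Cref{thm:saa:bOgutQ6IqE}. The degradation from $1/2$ to $1/3$ is a purely scalar loss: it is the factor $\sqrt{3/2}$ in the bridging inequality above, which inflates the variance proxy from $\lambda^2\sigma^2/2$ to $3\lambda^2\sigma^2/4$. Your surrounding intuition is sound---asymmetric increments do rule out a naive product bound with $\cosh$ on the right, which is why the $\exp(s)-s$ form appears, and you are right that converting the $\cosh$-condition into $\cE{\exp(\tau^{-2}\norm[\hsp]{\rv_i}^2)}\leq\eu$ and invoking \Cref{thm:saa:bOgutQ6IqE} would give only the weaker constant $\tau$. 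But without a concrete inequality linking $\exp(s)-s$ to the hypothesized $\cosh$-moments, your step one is not a valid application of the cited theorem, and the claimed bound is asserted rather than derived.
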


We apply the following two facts to prove 
\Cref{thm:saa:bOgutQ6IqE,prop:saa:2020-11-21T20:25:01.71}.
\begin{Theorem}
	[{see \cite[Thm.\ 3]{Pinelis1986}}]
	\label{thm:saa:2020-03-15T00:12:53.514}
	Let $\hsp$ be a real, separable Hilbert space.
	Suppose that $\rv_i : \Theta\to \hsp$ 
	$(i=1, \ldots, N \in \natural)$ are independent, mean-zero 
	random vectors.
	Then, for all $\lambda \geq 0$, 
	$$
	\cE{\cosh(\lambda\norm[\hsp]{ \rv_1 + \cdots + \rv_N})} 
	\leq \prod_{i=1}^N 
	\cE{\exp(\lambda \norm[\hsp]{\rv_i})- \lambda \norm[\hsp]{\rv_i}}.
	$$
\end{Theorem}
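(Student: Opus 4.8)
The plan is to prove the bound by induction on $N$, the crux being a single \emph{deterministic} pointwise inequality that allows the mean-zero hypothesis to annihilate the ``cross term.'' First I would establish, for arbitrary fixed $x, w \in \hsp$ and every $\lambda \geq 0$, the addition-formula estimate
\[
\cosh(\lambda\norm[\hsp]{x+w}) \leq \cosh(\lambda\norm[\hsp]{x})\cosh(\lambda\norm[\hsp]{w}) + \frac{\sinh(\lambda\norm[\hsp]{x})}{\norm[\hsp]{x}}\,\frac{\sinh(\lambda\norm[\hsp]{w})}{\norm[\hsp]{w}}\,\inner[\hsp]{x}{w},
\]
with the convention $\sinh(\lambda s)/s \to \lambda$ as $s \to 0$. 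To see this, set $a = \norm[\hsp]{x}$, $b = \norm[\hsp]{w}$ and note that, as $\inner[\hsp]{x}{w}$ ranges over $[-ab, ab]$, the quantity $t = \norm[\hsp]{x+w}$ ranges over $[|a-b|, a+b]$ with $t^2 = a^2 + 2\inner[\hsp]{x}{w} + b^2$. Viewed as functions of $u = t^2$, the right-hand side is affine in $u$, while $\cosh(\lambda\sqrt{u})$ is convex in $u$ (because $s\cosh s \geq \sinh s$ for $s \geq 0$); since the two sides agree at the endpoints $u = (a \mp b)^2$, the affine side is exactly the chord of the convex one, and the claim follows.

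Second, I would linearize the cross term so that it becomes genuinely linear in $w$. Using $\sinh(\lambda b)/b \geq \lambda$ together with the Cauchy--Schwarz bound $\inner[\hsp]{x}{w} \leq ab$, the surplus $\frac{\sinh(\lambda a)}{a}\big(\frac{\sinh(\lambda b)}{b} - \lambda\big)\inner[\hsp]{x}{w}$ is dominated by $\sinh(\lambda a)\,(\sinh(\lambda b) - \lambda b)$; collecting the remaining even-in-$w$ terms through $\cosh(\lambda a)\cosh(\lambda b) + \sinh(\lambda a)\sinh(\lambda b) = \cosh(\lambda(a+b))$ and the nonnegativity of $\eu^{-\lambda a}(\sinh(\lambda b) - \lambda b)$, I would arrive at the pointwise inequality
\[
\cosh(\lambda\norm[\hsp]{x+w}) \leq \cosh(\lambda\norm[\hsp]{x})\big(\exp(\lambda\norm[\hsp]{w}) - \lambda\norm[\hsp]{w}\big) + \frac{\lambda\sinh(\lambda\norm[\hsp]{x})}{\norm[\hsp]{x}}\,\inner[\hsp]{x}{w}.
\]
The decisive feature is that the final term is linear in $w$.

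Third, I would run the induction. Writing $S_k = \rv_1 + \cdots + \rv_k$ and conditioning on $\cF_{N-1} = \sigma(\rv_1, \ldots, \rv_{N-1})$, I apply the pointwise inequality with $x = S_{N-1}$, which is $\cF_{N-1}$-measurable, and $w = \rv_N$; since $\rv_N$ is independent of $\cF_{N-1}$ with $\cE{\rv_N} = 0$, the conditional expectation of the linear term vanishes, giving $\cE{\cosh(\lambda\norm[\hsp]{S_N}) \given \cF_{N-1}} \leq \cosh(\lambda\norm[\hsp]{S_{N-1}})\,\cE{\exp(\lambda\norm[\hsp]{\rv_N}) - \lambda\norm[\hsp]{\rv_N}}$. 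Taking total expectations, invoking independence, and iterating down to the base case $\cosh(\lambda\norm[\hsp]{\rv_1}) \leq \exp(\lambda\norm[\hsp]{\rv_1}) - \lambda\norm[\hsp]{\rv_1}$ (again $\sinh s \geq s$) produces the stated product bound.

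The main obstacle is the deterministic lemma underlying the first two steps. The naive triangle-inequality bound leaves a cross term weighted by $\sinh(\lambda\norm[\hsp]{w})/\norm[\hsp]{w}$, which is odd but \emph{not} linear in $w$, so its expectation need not vanish under a mere mean-zero assumption (as opposed to symmetry of $\rv_N$). The real work is therefore to trade this weight for the constant $\lambda$---at the price of an even-in-$w$ remainder that is absorbed into $\exp(\lambda\norm[\hsp]{w}) - \lambda\norm[\hsp]{w}$---so that independence and $\cE{\rv_N} = 0$ alone suffice. Separability of $\hsp$ enters only to guarantee measurability of $\omega \mapsto \norm[\hsp]{S_k(\omega)}$ and the existence of the conditional expectations used above.
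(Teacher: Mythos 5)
Your proposal cannot be checked against an in-paper argument, because the paper does not prove this statement at all: the bracketed ``see \cite[Thm.\ 3]{Pinelis1986}'' signals that the theorem is imported from Pinelis's 1986 work and used as a black box (it feeds directly into the proof of \Cref{thm:saa:bOgutQ6IqE}). So the honest comparison is: you supply a complete, self-contained proof where the paper supplies a citation. Your argument is correct, and I verified its three steps. The chord argument is sound: writing $a=\norm[\hsp]{x}$, $b=\norm[\hsp]{w}$, $c=\inner[\hsp]{x}{w}$, the left-hand side $\cosh(\lambda\sqrt{a^2+b^2+2c})$ is convex in $c$ because $u\mapsto\cosh(\lambda\sqrt{u})$ is convex (equivalent to $s\cosh s\geq\sinh s$), the right-hand side is affine in $c$, and both sides equal $\cosh(\lambda(a\pm b))$ at the endpoints $c=\pm ab$, so the convex function lies below its chord. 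The linearization is also sound: the surplus $\frac{\sinh(\lambda a)}{a}\bigl(\frac{\sinh(\lambda b)}{b}-\lambda\bigr)c$ has a nonnegative coefficient, hence is at most $\sinh(\lambda a)(\sinh(\lambda b)-\lambda b)$ by Cauchy--Schwarz, and the resulting even part sits below $\cosh(\lambda a)\bigl(\eu^{\lambda b}-\lambda b\bigr)$ with deficit exactly $(\cosh(\lambda a)-\sinh(\lambda a))(\sinh(\lambda b)-\lambda b)=\eu^{-\lambda a}(\sinh(\lambda b)-\lambda b)\geq 0$, as you claim; your diagnosis that the whole difficulty is trading the weight $\sinh(\lambda\norm[\hsp]{w})/\norm[\hsp]{w}$ for the constant $\lambda$ (so that mean-zero rather than symmetry suffices) is exactly right. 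The conditioning step and the base case $\cosh s\leq\eu^{s}-s$ are standard. Only routine integrability bookkeeping is missing and should be added in a polished version: each factor $\cE{\exp(\lambda\norm[\hsp]{\rv_i})-\lambda\norm[\hsp]{\rv_i}}$ is at least $1$ (since $\eu^{s}-s\geq 1$ for $s\geq 0$), so the inequality is trivial unless every factor is finite; that finiteness, together with the Bochner integrability implicit in ``mean-zero,'' gives $\cE{\norm[\hsp]{S_{N-1}}\,\norm[\hsp]{\rv_N}}<\infty$ and justifies both the independence lemma for conditional expectations and the identity $\cE{\inner[\hsp]{x}{\rv_N}}=\inner[\hsp]{x}{\cE{\rv_N}}=0$. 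What your route buys is self-containedness at the precise point where the paper's headline tail bounds rest on external work, via only Hilbert-space identities and one-variable convexity; what the citation buys the paper is brevity and access to Pinelis's result in its original, more general martingale setting.
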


\begin{lemma}
	\label{lem:saa:2020-01-21T19:41:52.108}
	If $\sigma > 0$ and	$\rvariable : \Theta
	\to \real$ is measurable with
	$\cE{\exp(\sigma^{-2}|\rvariable|^2)} \leq \eu$,
	\begin{align}
	\label{eq:saa:2020-01-19T11:52:16.326}
	\cE{\exp(\lambda |\rvariable|) - \lambda|\rvariable|}
	\leq \exp(3\lambda^2 \sigma^2/4) 
	\quad \tfa \quad 
	\lambda \in \real_+. 
	\end{align}
\end{lemma}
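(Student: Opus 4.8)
The plan is to reduce the lemma to a pointwise inequality for the scalar function $t \mapsto \eu^t - t$ and then split the range of $\lambda$ into two regimes separated by $\lambda^2\sigma^2 = 4/3$. Writing $Y = |X| \ge 0$, the hypothesis reads $\cE{\exp(Y^2/\sigma^2)} \leq \eu$ and the goal is to bound $\cE{\exp(\lambda Y) - \lambda Y}$. It is worth noting up front why the subtraction of $\lambda Y$ cannot be discarded: differentiating in $\lambda$ at $0$ shows that $\cE{\exp(\lambda Y)}$ alone already exceeds $\exp(3\lambda^2\sigma^2/4)$ for small $\lambda > 0$ whenever $\cE{Y} > 0$, so the linear correction is essential precisely in the small-$\lambda$ regime, where it cancels the first-order term $\lambda\cE{Y}$.

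The first building block is the elementary inequality
\[
\eu^t - t \leq \exp(3t^2/4) \quad \tfa \quad t \geq 0,
\]
which I would prove by showing that $p(t) = (3/4)t^2 - \ln(\eu^t - t)$ is nondecreasing (the denominator is positive since $\eu^t - t \geq 1$). The condition $p'(t) \ge 0$ rearranges to $\eu^t(\tfrac32 t - 1) + 1 - \tfrac32 t^2 \geq 0$, and this last function vanishes at $t=0$ and has nonnegative derivative by one more differentiation together with $\eu^t \geq 1 + t$. In the regime $\lambda^2\sigma^2 \leq 4/3$ I then apply this with $t = \lambda Y$ to get, pointwise, $\exp(\lambda Y) - \lambda Y \leq \exp\big((3\lambda^2\sigma^2/4)\,Y^2/\sigma^2\big)$. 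Since the constant $c = 3\lambda^2\sigma^2/4 \leq 1$, the power-mean (Jensen) inequality yields $\cE{\exp(c\,Y^2/\sigma^2)} \leq \big(\cE{\exp(Y^2/\sigma^2)}\big)^c \leq \eu^c = \exp(3\lambda^2\sigma^2/4)$.

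In the complementary regime $\lambda^2\sigma^2 \geq 4/3$ I would instead establish the pointwise decoupling
\[
\exp(\lambda Y) - \lambda Y \leq \exp\big(Y^2/\sigma^2 - 1 + 3\lambda^2\sigma^2/4\big),
\]
after which integrating and invoking the hypothesis gives $\cE{\exp(\lambda Y) - \lambda Y} \leq \exp(3\lambda^2\sigma^2/4 - 1)\,\cE{\exp(Y^2/\sigma^2)} \leq \exp(3\lambda^2\sigma^2/4)$. The two regimes deliver the same bound and meet exactly at $\lambda^2\sigma^2 = 4/3$, where each collapses to the sharp hypothesis $\cE{\exp(Y^2/\sigma^2)} \leq \eu$; this coincidence is what pins down the constant $3/4$.

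The main obstacle is the large-regime decoupling, which is genuinely a two-variable inequality rather than a one-dimensional calculus fact. I expect to handle it by the substitution $w = \lambda Y$ and $a = \lambda^2\sigma^2$, and then minimizing the right-hand exponent $w^2/a + 3a/4$ over $a \geq 4/3$. For $w \geq 2/\sqrt{3}$ the minimizer is interior and the claim reduces to the one-dimensional inequality $\eu^w - w \leq \exp(\sqrt{3}\,w - 1)$, whereas for $w \leq 2/\sqrt{3}$ the minimum sits at the boundary $a = 4/3$ and the claim reduces back to $\eu^w - w \leq \exp(3w^2/4)$. Each of these one-dimensional inequalities again follows from a monotonicity argument anchored at the origin and using $\eu^w \geq 1 + w$, so the whole lemma rests on two clean scalar estimates.
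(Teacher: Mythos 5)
Your proof is correct, and it shares the skeleton of the paper's own proof---split $\lambda$ into a small and a large regime, dominate the integrand pointwise by an exponential of a quadratic, and finish with Jensen's (power-mean) inequality; the paper bases its argument on the proof of Prop.\ 7.72 in the Shapiro--Dentcheva--Ruszczy\'nski book---but the execution differs in both regimes. In the small regime the paper invokes the ready-made, sharper inequality $\exp(s) \leq s + \exp(9s^2/16)$ and therefore works on the larger interval $\lambda \in [0, 4/(3\sigma)]$, whereas you prove the weaker scalar bound $\eu^t - t \leq \exp(3t^2/4)$ from scratch; your Jensen step with exponent $3\lambda^2\sigma^2/4 \leq 1$ is then structurally identical to the paper's step with exponent $9\lambda^2\sigma^2/16 \leq 1$, so the small regime is the same idea made self-contained. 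In the large regime the routes genuinely diverge: the paper simply discards the term $-\lambda|\rvariable|$, applies Young's inequality $\lambda s \leq 3\lambda^2\sigma^2/8 + 2s^2/(3\sigma^2)$, uses Jensen with exponent $2/3$, and absorbs the resulting factor $\eu^{2/3}$ via $2/3 \leq 3\sigma^2\lambda^2/8$; you instead retain the linear term and establish the two-variable decoupling $\eu^w - w \leq \exp(w^2/a - 1 + 3a/4)$ for $a = \lambda^2\sigma^2 \geq 4/3$ by minimizing the exponent over $a$. The paper's version is shorter; yours buys independence from external references, uses the hypothesis with exponent exactly $1$ (no Jensen needed in that regime), and has the pleasant feature that both regimes collapse to the hypothesis precisely at the crossover $\lambda^2\sigma^2 = 4/3$, which explains the constant $3/4$. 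One small correction: in the interior case $w \geq 2/\sqrt{3}$ your reduction is $\eu^w - w \leq \exp(\sqrt{3}\,w - 1)$, which is false near the origin (at $w = 0$ it reads $1 \leq \eu^{-1}$), so the monotonicity argument cannot be ``anchored at the origin'' as you state; it must be anchored at $w = 2/\sqrt{3}$, where one checks $\eu + 2/\sqrt{3} - \eu^{2/\sqrt{3}} > 0$ and that the derivative $\sqrt{3}\,\eu^{\sqrt{3}w - 1} - \eu^w + 1$ is positive for all $w \geq 2/\sqrt{3}$ (it is, since $\sqrt{3}\,\eu^{\sqrt{3}w-1} \geq \eu^w$ once $w(\sqrt{3}-1) \geq 1 - \tfrac12\ln 3$, a threshold below $2/\sqrt{3}$). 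With that anchor repaired, every step checks out.
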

\begin{proof}
	The proof is based on the proof of \cite[Prop.\ 7.72]{Shapiro2014}.
	
	Fix $\lambda \in [0, 4/(3 \sigma)]$.
	For all $s \in \real$, 
	$\exp(s) \leq s + \exp(9s^2/16)$ \cite[p.\ 449]{Shapiro2014}.  
	Using Jensen's inequality 	and 
	$\cE{\exp(|\rvariable|^2/\sigma^2)} \leq \eu$, 
	we obtain
	\begin{align}
	\label{eq:saa:2020-01-19T12:25:22.097}
	\begin{aligned}
	\cE{\eu^{\lambda|\rvariable|} - \lambda |\rvariable|}
	\leq  \cE{\eu^{9\lambda^2 |\rvariable|^2/16}}
	\leq
	\cE{\eu^{|\rvariable|^2/\sigma^2}}^{9\lambda^2\sigma^2/16}
	\leq 
	\eu^{9 \lambda^2 \sigma^2/16}.
	\end{aligned}
	\end{align}
	
	Now, fix $\lambda  \geq 4/(3\sigma)$.
	For all $s \in \real$,
	Young's inequality yields
	$\lambda s \leq  3 \lambda^2 \sigma^2/8
	+ 2s^2/(3\sigma^2)$.
	Combined with
	Jensen's inequality, 
	$\cE{\exp(|\rvariable|^2/\sigma^2)} \leq \eu$,
	and $2/3 \leq 3\sigma^2\lambda^2/8$, 
	we get
	\begin{align*}
	\begin{aligned}
	\cE{\eu^{\lambda |\rvariable|} - \lambda |\rvariable|}
	& \leq 
	\cE{\eu^{\lambda |\rvariable|}}
	\leq 
	\eu^{3\lambda^2 \sigma^2/8}
	\cE{\eu^{2|\rvariable|^2/(3\sigma^2)}}
	\leq 
	\eu^{3\lambda^2 \sigma^2/8 + 2/3}
	\leq 
	\eu^{3\lambda^2 \sigma^2/4}.
	\end{aligned}
	\end{align*}
	Together with \eqref{eq:saa:2020-01-19T12:25:22.097}, 	we obtain
	\eqref{eq:saa:2020-01-19T11:52:16.326}. 
\end{proof}
\begin{proof}[{Proof of \Cref{thm:saa:bOgutQ6IqE}}]
	We use a Chernoff-type approach to 
	establish \eqref{eq:saa:bOgutQ6IqE_1}.
	Fix $\lambda > 0$, $\varepsilon \geq  0$, 
	and $N \in \natural$. We define
	$\randomsum_N = \rv_1 + \cdots + \rv_N$.
	Using $\cE{\exp(\tau^{-2}\norm[\hsp]{\rv_i}^2)}\leq\eu$
	and applying \Cref{lem:saa:2020-01-21T19:41:52.108} 
	to $\rvariable = \norm[\hsp]{\rv_i}$,
	we find that 
	\begin{align*}
	\prod_{i = 1}^N
	\cE{\exp(\lambda \norm[\hsp]{\rv_i}) -\lambda \norm[\hsp]{\rv_i}}
	\leq 
	\prod_{i = 1}^N  \exp(3\lambda^2 \tau^2/4) 
	= \exp(3\lambda^2 \tau^2 N/4).
	\end{align*}
	Combined with Markov's inequality, \Cref{thm:saa:2020-03-15T00:12:53.514}, 
	and $\exp \leq 2 \cosh$, we obtain
	\begin{align*} 
	\Prob{\norm[\hsp]{\randomsum_N} \geq N\varepsilon}
	&\leq \eu^{-\lambda N\varepsilon}
	\cE{\eu^{\lambda \norm[\hsp]{\randomsum_N}}}
	\leq 2\eu^{-\lambda N\varepsilon}
	\cE{\cosh(\lambda \norm[\hsp]{\randomsum_N})}
	\\
	&\leq 2\eu^{-\lambda N \varepsilon + 3\lambda^2 \tau^2 N/4}.
	\end{align*}
	Minimizing the right-hand side over $\lambda > 0$
	yields \eqref{eq:saa:bOgutQ6IqE_1}.
\end{proof}

\begin{proof}[{Proof of \Cref{prop:saa:2020-11-21T20:25:01.71}}]
	We have \href{https://tinyurl.com/y4xq8tne}
	{$\exp(s)-s \leq \cosh( s \sqrt{3/2})$}
	for all $s \in \real$.
	Hence, the assumptions ensure
	$
	\cE{\exp(\lambda \norm[\hsp]{\rv_i}) -\lambda \norm[\hsp]{\rv_i}}
	\leq \exp(3\lambda^2\sigma^2/4)
	$
	for all $\lambda \in \real$.
	The remainder of the proof is as that of 
	\Cref{thm:saa:bOgutQ6IqE}. 
\end{proof}

\section{Exponential Tail Bounds for SAA Solutions}
\label{sec:tailboundssaa}

We state conditions that allow
us to derive 
exponential bounds on the tail probabilities of
the distance between SAA solutions and their true counterparts.
In \cref{subsec:saa:lqocu}, we demonstrate that our
conditions are fulfilled for many linear-quadratic
control problems considered in the literature.

\subsection{Assumptions and Measurability of SAA Solutions}

Throughout the manuscript, 
$u^*$ is assumed to be a solution to \eqref{eq:saa:2020-01-15T21:11:01.962}.
\begin{assumption}
	\label{ass:2020-07-06T10:42:18.634_1}
	\begin{enumthm}[nosep,leftmargin=*]
		\item 
		\label{ass:2020-07-06T10:42:18.634_11}
		The space $\csp$ is a  real, separable Hilbert
		space.
		\item 
		\label{ass:2020-07-06T10:42:18.634_13}
		The function 
		$\Psi : \csp \to \real \cup \{\infty\}$ 
		is convex, proper, and lower-semicontinuous.
		\item 
		\label{ass:2020-07-06T10:42:18.634_22}
		The integrand
		$\rpobj : \csp \times \Xi \to \real$ 
		is a \Caratheodory\ function, and
		for some $\alpha  >0$,
		$\rpobj(\cdot, \xi)$ is $\alpha$-strongly convex
		for each $\xi \in \Xi$.
		\item 
		\label{ass:2020-07-06T10:42:18.634_21}
		The function
		$\rpobj(\cdot, \xi)$ is 	
		\gateaux\ differentiable
		on a convex neighborhood of $\dom{\Psi}$
		for all $\xi \in \Xi$, 
		and $\nabla_u \rpobj(u^*, \cdot)  : \Xi \to \csp$
		is measurable.
		\item 
		\label{ass:2020-07-06T10:42:18.634_12}
		The map
		$\erpobj : \csp \to \real \cup \{\infty\}$ 
		defined in \eqref{eq:saa:erpobjN} is \gateaux\ differentiable
		at $u^*$.
	\end{enumthm}
\end{assumption}%
\begin{lemma}
	\label{lem:saa:2020-07-08T14:08:44.744_1}
	Let 
	Assumptions~\ref{ass:2020-07-06T10:42:18.634_11}--\ref{ass:2020-07-06T10:42:18.634_22}
	hold.
	If $u_N^* : \Omega \to \csp$  is a solution
	to \eqref{eq:saa:saalqcp}, 
	then $u_N^*$
	is the unique  solution to \eqref{eq:saa:saalqcp}
	and is measurable.
\end{lemma}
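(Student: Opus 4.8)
The plan is to prove uniqueness from strong convexity and then measurability by realizing $\obj_N$ as a normal integrand and invoking a measurable selection theorem; note that only Assumptions~\ref{ass:2020-07-06T10:42:18.634_11}--\ref{ass:2020-07-06T10:42:18.634_22} are available, so the argument must avoid any use of differentiability. For each fixed $\omega\in\Omega$, the function $\erpobj_N(\cdot,\omega)=\frac1N\sum_{i=1}^N\rpobj(\cdot,\xi^i(\omega))$ is $\alpha$-strongly convex, because each summand is $\alpha$-strongly convex by \ref{ass:2020-07-06T10:42:18.634_22} and averaging preserves this property; adding the convex $\Psi$ leaves $\obj_N(\cdot,\omega)$ $\alpha$-strongly convex. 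Moreover $\obj_N(\cdot,\omega)$ is proper, since $\dom{\Psi}\neq\emptyset$ and $\rpobj$ is real-valued, and lower semicontinuous, since $\rpobj(\cdot,\xi)$ is continuous (the \Caratheodory\ property in \ref{ass:2020-07-06T10:42:18.634_22}) and $\Psi$ is lower semicontinuous by \ref{ass:2020-07-06T10:42:18.634_13}. A proper, lower semicontinuous, $\alpha$-strongly convex function on the Hilbert space $\csp$ has at most one minimizer; as a solution is assumed to exist, $u_N^*(\omega)$ is the unique minimizer for every $\omega$, which proves the first claim.

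For measurability I would first verify that $\obj_N:\csp\times\Omega\to\real\cup\{\infty\}$ is a normal integrand. For fixed $u\in\csp$, the map $\omega\mapsto\rpobj(u,\xi^i(\omega))$ is measurable as the composition of the measurable random element $\xi^i$ with $\rpobj(u,\cdot)$, which is measurable by the \Caratheodory\ property, so $\omega\mapsto\obj_N(u,\omega)$ is measurable; for fixed $\omega$, $\obj_N(\cdot,\omega)$ is lower semicontinuous and convex by the previous paragraph. Since $\csp$ is separable by \ref{ass:2020-07-06T10:42:18.634_11}, $\erpobj_N$ is a \Caratheodory\ integrand and the deterministic, proper, lower semicontinuous, convex $\Psi$ is a normal integrand, so their sum $\obj_N$ is a normal integrand on the Polish space $\csp$.

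With $\obj_N$ a normal integrand, the argmin multifunction $\omega\mapsto\{\,u\in\csp:\obj_N(u,\omega)=\inf_{v\in\csp}\obj_N(v,\omega)\,\}$ is measurable; its values are nonempty by the assumed existence of solutions and, by the uniqueness just established, are singletons. A measurable selection theorem of Kuratowski--Ryll-Nardzewski type, applicable because $\csp$ is Polish, then yields a measurable selection, which must coincide with $u_N^*$; hence $u_N^*$ is measurable.

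The main obstacle is exactly this infinite-dimensional measurability step. Because $\Psi$ is only lower semicontinuous and not continuous, one cannot evaluate $\obj_N(\cdot,\omega)$ on a single fixed countable dense subset of $\csp$ and recover $\inf_{v\in\csp}\obj_N(v,\omega)$ or locate the minimizer by elementary pointwise limits, so the value function and the approximate minimizers must be handled through the normal-integrand and measurable-selection machinery. Strong convexity helps quantitatively and offers an alternative, more explicit route: any $\varepsilon$-minimizer $u$ of $\obj_N(\cdot,\omega)$ satisfies $(\alpha/2)\norm[\csp]{u-u_N^*(\omega)}^2\le\varepsilon$, so a measurable family of $\varepsilon$-minimizers converges pointwise to $u_N^*$ as $\varepsilon\downarrow0$, exhibiting $u_N^*$ as a pointwise limit of measurable maps.
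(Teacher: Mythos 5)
Your proposal is correct and follows essentially the same route as the paper: uniqueness from the $\alpha$-strong convexity of $\obj_N(\cdot,\omega)$ for each fixed $\omega$, and measurability via the normal-integrand/measurable-selection machinery (the paper cites Castaing--Valadier for the measurability of the value function $\inf_{u\in\csp}\obj_N(u,\cdot)$ and Aubin--Frankowska for a measurable selection of the single-valued argmin map, which is exactly the apparatus you unpack). Your extra verification that $\obj_N$ is a normal integrand and the alternative $\varepsilon$-minimizer limit argument are sound but add nothing beyond what those cited selection theorems already deliver.
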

\begin{proof}
	For each $\omega \in \Omega$,
	the SAA problem's objective function $\obj_N(\cdot, \omega)$
	is strongly convex and hence 
	$u_N^*$ is the unique solution to 
	\eqref{eq:saa:saalqcp}.
	The function 
	$\inf_{u \in \csp}\, \obj_N(u,\cdot) : \pOmega \to \real$
	is measurable
	\cite[Cor.\ VII-2]{Castaing1977}
	(see also \cite[Lem.\ III.39]{Castaing1977}).
	Hence the multifunction $\arg\inf_{u \in \csp}\, \obj_N(u, \cdot)$ 
	is single-valued and has a measurable selection 
	\cite[Thm.\ 8.2.9]{Aubin2009}.
	Therefore 	$u_N^* : \pOmega \to \csp$ is measurable.
\end{proof}

We impose conditions on the integrability of
$\nabla_u \rpobj(u^*, \xi)-\nabla \erpobj(u^*)$.
\begin{assumption}
	\label{ass:at}
	\begin{enumthm}[nosep,leftmargin=*]
		\item 
		\label{ass:2020-07-06T10:42:18.634_4}
		For some $\sigma > 0$, 
		$\cE{\norm[\csp]{\nabla_u \rpobj(u^*, \xi)-
				\nabla \erpobj(u^*)}^2} \leq \sigma^2$.
		\item 
		\label{ass:2020-07-06T10:42:18.634_3}
		For some $\tau > 0$, 
		$
		\cE{\exp 
			(
			\tau^{-2}\norm[\csp]{\nabla_u \rpobj(u^*, \xi)-
				\nabla \erpobj(u^*)}^2
			)
		}
		\leq \eu
		$.
	\end{enumthm}
\end{assumption}
\Cref{ass:2020-07-06T10:42:18.634_3}
implies
\Cref{ass:2020-07-06T10:42:18.634_4} with $\sigma^2 = \tau^2$
\cite[p.\ 1584]{Nemirovski2009}.
\Cref{ass:2020-07-06T10:42:18.634_3} and its variants
are standard conditions in the literature on stochastic programming 
\cite[p.\ 679]{Duchi2012}, 
\cite[pp.\ 1035--1036]{Guigues2017}, 
\cite[eq.\ (2.50)]{Nemirovski2009}.
For example, if $\nabla_u \rpobj(u^*, \xi)-\nabla \erpobj(u^*)$
is essentially bounded, then \Cref{ass:2020-07-06T10:42:18.634_3} is fulfilled.
More generally, if $\nabla_u \rpobj(u^*, \xi)-\nabla \erpobj(u^*)$
is $\gamma$-sub-Gaussian, then \Cref{ass:2020-07-06T10:42:18.634_3} 
holds true \cite[Thm.\ 3.4]{Fukuda1990}.

\subsection{Exponential Tail and Mean Square Error  Bounds}
We establish exponential tail and mean square error bounds
on $\norm[\csp]{u^*-u_N^*}$.
\begin{Theorem}
	\label{thm:saa:2020-01-25T19:33:40.055}
	Let $u^*$ be a solution to \eqref{eq:saa:2020-01-15T21:11:01.962}
	and let $u_N^*$
	be a solution to \eqref{eq:saa:saalqcp}.
	If Assumptions~\ref{ass:2020-07-06T10:42:18.634_1} and
	\ref{ass:2020-07-06T10:42:18.634_4} hold, 
	then 
	\begin{align}
	\label{eq:Feb2820211612}
	\cE{\norm[\csp]{u^*-u_N^*}^2} \leq \sigma^2/(\alpha^2 N).
	\end{align}
	If in addition 	\Cref{ass:2020-07-06T10:42:18.634_3}
	holds, then
	for all $\varepsilon > 0$, 
	\begin{align}
	\label{eq:saa:2020-01-25T19:33:40.055}
	\Prob{\norm[\csp]{u^*-u_N^*} \geq \varepsilon}
	\leq 
	2\exp(-\tau^{-2}N \varepsilon^2 \alpha^2/3).
	\end{align}
\end{Theorem}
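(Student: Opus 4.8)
The plan is to reduce both claims to the almost-sure error estimate \eqref{eq:intro:2020-01-25T19:33:12.799} together with the representation of the gradient discrepancy as a normalized sum of independent, mean-zero Hilbert-space vectors. Concretely, I would set $\rv_i = \nabla_u\rpobj(u^*,\xi^i) - \nabla\erpobj(u^*)$ for $i = 1, \ldots, N$. Since $\xi^1, \ldots, \xi^N$ are i.i.d.\ copies of $\xi$, the $\rv_i$ are i.i.d.\ $\csp$-valued random vectors, and by \Cref{lem:saa:2020-04-01T18:25:27.402} one has $\nabla\erpobj(u^*) = \cE{\nabla_u\rpobj(u^*,\xi)}$, so each $\rv_i$ is mean-zero and $\nabla\erpobj_N(u^*) - \nabla\erpobj(u^*) = \frac{1}{N}\sum_{i=1}^N \rv_i$. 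Measurability of $\norm[\csp]{u^*-u_N^*}$ as a random variable is guaranteed by \Cref{lem:saa:2020-07-08T14:08:44.744_1}, so the expectation and probability in the statement are well defined.

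For the mean square error bound \eqref{eq:Feb2820211612}, I would square \eqref{eq:intro:2020-01-25T19:33:12.799} and take expectations, giving $\alpha^2\cE{\norm[\csp]{u^*-u_N^*}^2} \leq N^{-2}\cE{\norm[\csp]{\sum_{i=1}^N \rv_i}^2}$. Expanding the squared norm into inner products and using that the $\rv_i$ are independent and mean-zero, every cross term $\cE{\inner[\csp]{\rv_i}{\rv_j}}$ with $i \neq j$ vanishes, so $\cE{\norm[\csp]{\sum_{i=1}^N \rv_i}^2} = \sum_{i=1}^N \cE{\norm[\csp]{\rv_i}^2} = N\,\cE{\norm[\csp]{\rv_1}^2}$. \Cref{ass:2020-07-06T10:42:18.634_4} bounds $\cE{\norm[\csp]{\rv_1}^2}$ by $\sigma^2$, and dividing by $\alpha^2 N^2$ yields \eqref{eq:Feb2820211612}.

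For the exponential tail bound \eqref{eq:saa:2020-01-25T19:33:40.055}, I would pass from $u^*-u_N^*$ to the sum via \eqref{eq:intro:2020-01-25T19:33:12.799}: almost surely, $\norm[\csp]{u^*-u_N^*} \geq \varepsilon$ forces $\norm[\csp]{\sum_{i=1}^N \rv_i} \geq N\alpha\varepsilon$, so the event on the left of \eqref{eq:saa:2020-01-25T19:33:40.055} is contained (up to a null set) in $\{\,\norm[\csp]{\rv_1 + \cdots + \rv_N} \geq N(\alpha\varepsilon)\,\}$. \Cref{ass:2020-07-06T10:42:18.634_3} is precisely the hypothesis $\cE{\exp(\tau^{-2}\norm[\csp]{\rv_i}^2)} \leq \eu$ needed to invoke \Cref{thm:saa:bOgutQ6IqE} with $\hsp = \csp$; applying that theorem with $\alpha\varepsilon$ in place of $\varepsilon$ gives $2\exp(-\tau^{-2}(\alpha\varepsilon)^2 N/3) = 2\exp(-\tau^{-2}N\varepsilon^2\alpha^2/3)$, which is \eqref{eq:saa:2020-01-25T19:33:40.055}.

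Since the two substantive ingredients---the deterministic estimate \eqref{eq:intro:2020-01-25T19:33:12.799} and the Pinelis-type bound \Cref{thm:saa:bOgutQ6IqE}---are already in hand, the present argument is essentially assembly, and the only points requiring care are bookkeeping ones: verifying that the square-integrability in \Cref{ass:2020-07-06T10:42:18.634_4} makes each $\rv_i$ Bochner integrable, so that the mean-zero property and the vanishing of the cross terms are legitimate, and confirming that the almost-sure validity of \eqref{eq:intro:2020-01-25T19:33:12.799} suffices for the inclusion of events used in the tail estimate. I expect the event-inclusion step to be the most delicate to phrase correctly, since it couples the random minimizer $u_N^*$ to the random sum through an almost-sure, rather than everywhere, inequality.
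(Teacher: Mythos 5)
Your proposal is correct and follows essentially the same route as the paper's proof: both reduce the claims to the deterministic estimate \eqref{eq:saa:2020-01-25T19:33:12.799} combined with the representation $\nabla\erpobj_N(u^*)-\nabla\erpobj(u^*)=\frac{1}{N}\sum_{i=1}^N Z_i$ of i.i.d.\ mean-zero vectors (via \Cref{lem:saa:2020-04-01T18:25:27.402}), then use orthogonality of the cross terms for \eqref{eq:Feb2820211612} and the event inclusion plus \Cref{thm:saa:bOgutQ6IqE} for \eqref{eq:saa:2020-01-25T19:33:40.055}. The only cosmetic difference is that the paper cites a reference for the identity $\cE{\norm[\csp]{\sum_{i=1}^N Z_i}^2}=\sum_{i=1}^N\cE{\norm[\csp]{Z_i}^2}$ where you expand the cross terms directly.
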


We prepare our proof of \Cref{thm:saa:2020-01-25T19:33:40.055}.

\begin{lemma}
	\label{lem:saa:2020-04-01T18:25:27.402}
	If Assumptions~\ref{ass:2020-07-06T10:42:18.634_1}
	and 
	\ref{ass:2020-07-06T10:42:18.634_4}
	hold, then
	$\cE{\nabla_u\rpobj(u^*, \xi)} = \nabla \erpobj(u^*)$.
\end{lemma}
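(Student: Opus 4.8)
The plan is to fix an arbitrary direction $v \in \csp$ and prove the scalar identity $\inner[\csp]{\cE{\nabla_u\rpobj(u^*,\xi)}}{v} = \inner[\csp]{\nabla\erpobj(u^*)}{v}$; since $v$ is arbitrary, the Hilbert space identity follows. First I would note that the Bochner integral $\cE{\nabla_u\rpobj(u^*,\xi)}$ is well defined: \Cref{ass:2020-07-06T10:42:18.634_21} supplies measurability of $\nabla_u\rpobj(u^*,\cdot)$, while \Cref{ass:2020-07-06T10:42:18.634_4} bounds its $L^2$-distance to the constant $\nabla\erpobj(u^*)$, so $\nabla_u\rpobj(u^*,\cdot)$ is square-integrable and hence integrable over the probability space. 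Because the bounded linear functional $\inner[\csp]{\cdot}{v}$ commutes with the Bochner integral, the left-hand side equals $\cE{\inner[\csp]{\nabla_u\rpobj(u^*,\xi)}{v}}$, and the same decomposition shows that $\xi \mapsto \inner[\csp]{\nabla_u\rpobj(u^*,\xi)}{v}$ is integrable.

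Next I would rewrite the right-hand side as a directional derivative. Since $u^*$ is a solution, $u^* \in \dom{\Psi}$ lies in the convex neighborhood on which each $\rpobj(\cdot,\xi)$ is \gateaux\ differentiable, and \Cref{ass:2020-07-06T10:42:18.634_12} gives \gateaux\ differentiability of $\erpobj$ at $u^*$. Writing $g_t(\xi) = t^{-1}(\rpobj(u^*+tv,\xi)-\rpobj(u^*,\xi))$ and using $\erpobj(u) = \cE{\rpobj(u,\xi)}$, the finiteness of $\erpobj$ at $u^*$ and at $u^*+tv$ for small $t>0$ makes each $g_t$ integrable, and
\[
\inner[\csp]{\nabla\erpobj(u^*)}{v}
= \lim_{t\downarrow 0}\frac{\erpobj(u^*+tv)-\erpobj(u^*)}{t}
= \lim_{t\downarrow 0}\cE{g_t}.
\]

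It then remains to interchange the limit and the expectation. By \Cref{ass:2020-07-06T10:42:18.634_22} each map $t\mapsto\rpobj(u^*+tv,\xi)$ is convex, so $g_t(\xi)$ is nondecreasing in $t>0$ and decreases, as $t\downarrow 0$, to the one-sided directional derivative $\ell(\xi) = \inner[\csp]{\nabla_u\rpobj(u^*,\xi)}{v}$. Fixing a small $t_0>0$, the inequalities $0 \le g_t(\xi)-\ell(\xi) \le g_{t_0}(\xi)-\ell(\xi)$ for $t \in (0,t_0]$, together with the integrability of $g_{t_0}$ and of $\ell$, let me invoke dominated (equivalently, monotone) convergence along any sequence $t_n\downarrow 0$, giving $\lim_{t\downarrow 0}\cE{g_t} = \cE{\ell}$. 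Combining the three displays yields the scalar identity, and the arbitrariness of $v$ completes the proof.

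The main obstacle is the interchange of limit and expectation: it hinges on producing an integrable dominating function for the difference quotients, which is exactly why \Cref{ass:2020-07-06T10:42:18.634_4} is imposed and why the monotonicity of convex difference quotients from \Cref{ass:2020-07-06T10:42:18.634_22} is essential. Verifying the integrability of $g_{t_0}$ in turn requires the finiteness of $\erpobj$ along the ray $t\mapsto u^*+tv$ near $t=0$, which follows from the \gateaux\ differentiability of $\erpobj$ at $u^*$.
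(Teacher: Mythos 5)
Your proof is correct and takes essentially the same route as the paper: both reduce the claim to the scalar identity $\cE{\inner[\csp]{\nabla_u \rpobj(u^*, \xi)}{v}} = \inner[\csp]{\nabla \erpobj(u^*)}{v}$ for every $v \in \csp$, establish integrability of $\nabla_u\rpobj(u^*,\cdot)$ from \Cref{ass:2020-07-06T10:42:18.634_4}, and then commute the inner product with the Bochner integral. The only difference is that the paper obtains the scalar identity by citing \cite[p.\ 1050]{Guigues2017}, whereas you prove it directly via monotonicity of convex difference quotients and dominated convergence, which is a correct, self-contained filling-in of that citation.
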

\begin{proof}
	Using
	\Cref{ass:2020-07-06T10:42:18.634_11,ass:2020-07-06T10:42:18.634_22,%
		ass:2020-07-06T10:42:18.634_21,ass:2020-07-06T10:42:18.634_12}, 
	we have
	$\cE{\inner[\csp]{\nabla_u \rpobj(u^*, \xi)}{v}} = 
	\inner[\csp]{\nabla \erpobj(u^*)}{v}$
	for all $v \in \csp$; cf.\
	\cite[p.\ 1050]{Guigues2017}.
	Owing to Assumptions~\ref{ass:2020-07-06T10:42:18.634_12}
	and \ref{ass:2020-07-06T10:42:18.634_4},
	the mapping $\nabla_u \rpobj(u^*, \xi)$ is 
	integrable.
	Hence 
	$\cE{\inner[\csp]{\nabla_u \rpobj(u^*, \xi)}{v}} 
	= 
	\inner[\csp]{\cE{\nabla_u \rpobj(u^*, \xi)}}{v}
	$
	for all $v \in \csp$
	(cf.\ \cite[p.\ 78]{Bharucha-Reid1972}).
\end{proof}

\begin{lemma}
	\label{lem:saa:2020-01-25T20:42:37.498}
	If Assumption~\ref{ass:2020-07-06T10:42:18.634_1} holds, then
	the function $\erpobj_N$
	defined in \eqref{eq:saa:erpobjN} 
	is \gateaux\ differentiable on a  neighborhood of $\dom{\Psi}$ and
	\wpone, 
	\begin{align}
	\label{eq:saa:2020-01-25T20:42:37.498}
	\innerp{\csp}{\nabla \erpobj_N(u_2) - \nabla \erpobj_N(u_1)}{u_2 - u_1}
	\geq \alpha \norm[U]{u_2-u_1}^2
	\,\,\, \tfa \,\,\, u_1, u_2 \in \dom{\Psi}. 
	\end{align}
\end{lemma}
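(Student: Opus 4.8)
The plan is to argue pathwise: fix an arbitrary realization and exploit that $\erpobj_N$ is a finite average of the integrands $\rpobj(\cdot, \xi^i)$, each of which is $\alpha$-strongly convex and \gateaux\ differentiable by \Cref{ass:2020-07-06T10:42:18.634_22,ass:2020-07-06T10:42:18.634_21}.

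First I would fix $\omega \in \Omega$ and write $\xi^i = \xi^i(\omega) \in \Xi$. By \Cref{ass:2020-07-06T10:42:18.634_21}, each $\rpobj(\cdot, \xi^i)$ is \gateaux\ differentiable on a convex neighborhood $\mathcal{N}_i$ of $\dom{\Psi}$, which is nonempty since $\Psi$ is proper by \Cref{ass:2020-07-06T10:42:18.634_13}. The finite intersection $\mathcal{N} = \bigcap_{i=1}^N \mathcal{N}_i$ is again a convex neighborhood of $\dom{\Psi}$, and on $\mathcal{N}$ the average $\erpobj_N = \tfrac1N\sum_{i=1}^N \rpobj(\cdot,\xi^i)$ is \gateaux\ differentiable with $\nabla \erpobj_N(u) = \tfrac1N\sum_{i=1}^N \nabla_u \rpobj(u,\xi^i)$. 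This proves the differentiability claim for every $\omega$, hence \wpone.

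Next I would establish strong monotonicity of the gradient of each integrand. By \Cref{ass:2020-07-06T10:42:18.634_22}, the function $h_i := \rpobj(\cdot, \xi^i) - (\alpha/2)\norm[\csp]{\cdot}^2$ is convex, and it is \gateaux\ differentiable on $\mathcal{N}_i$ with gradient $\nabla h_i(u) = \nabla_u \rpobj(u, \xi^i) - \alpha u$. For a convex function the \gateaux\ derivative coincides with a subgradient, so the first-order inequality applied at $u_1$ and $u_2$ and added yields $\innerp{\csp}{\nabla h_i(u_2) - \nabla h_i(u_1)}{u_2 - u_1} \geq 0$. Substituting the expression for $\nabla h_i$ and rearranging gives $\innerp{\csp}{\nabla_u \rpobj(u_2, \xi^i) - \nabla_u \rpobj(u_1, \xi^i)}{u_2-u_1} \geq \alpha \norm[\csp]{u_2-u_1}^2$ for all $u_1, u_2 \in \dom{\Psi}$. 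Averaging these $N$ inequalities and invoking the gradient formula from the first step then yields \eqref{eq:saa:2020-01-25T20:42:37.498}; since the reasoning is valid for every $\omega$, the inequality holds \wpone.

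The only real obstacle is the passage from $\alpha$-strong convexity to strong monotonicity of the gradient, which hinges on the standard fact that the \gateaux\ derivative of a convex function is a subgradient and hence satisfies the first-order inequality on the convex set where differentiability holds; the remaining bookkeeping---intersecting the neighborhoods and averaging the per-sample bounds---is routine.
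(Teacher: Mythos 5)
Your proof is correct and follows essentially the same route as the paper's: the paper likewise obtains the \gateaux\ differentiability of $\erpobj_N$ on a convex neighborhood of $\dom{\Psi}$ via the sum rule and then invokes the fact that $\alpha$-strong convexity plus \gateaux\ differentiability yields $\alpha$-strong monotonicity of the gradient (cited to Nemirovsky--Yudin), which you instead prove directly through the shifted convex function $h_i$ and the subgradient inequality. The only additional detail you supply is the intersection of the per-sample neighborhoods, a piece of bookkeeping the paper's one-line proof leaves implicit.
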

\begin{proof}
	Since, for each $\xi \in \Xi$, 
	$\rpobj(\cdot, \xi)$ is $\alpha$-strongly convex
	and \gateaux\ differentiable on a convex neighborhood
	$\bsp$ of $\dom{\Psi}$, 
	the sum rule and the definition of 
	$\erpobj_N$  imply its
	\gateaux\ differentiability
	on $\bsp$ and \eqref{eq:saa:2020-01-25T20:42:37.498}
	\cite[p.\ 48]{Nemirovsky1983}.
\end{proof}
\begin{lemma}
	\label{lem:saa:fonoclqcp}
	Let \Cref{ass:2020-07-06T10:42:18.634_1} hold
	and let $\omega \in \pOmega$ be fixed.
	Suppose that $u^*$ 
	is a solution to
	\eqref{eq:saa:2020-01-15T21:11:01.962} 
	and that $u_N^* = u_N^*(\omega)$ is a solution to
	\eqref{eq:saa:saalqcp}.
	Then 
	\begin{align}
	\label{eq:Feb2820211042}
	\innerp{\csp}{\nabla \erpobj_N(u_N^*)- \nabla \erpobj(u^*)}{u^* - u_N^*}
	\geq 0.
	\end{align}
\end{lemma}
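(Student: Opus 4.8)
The plan is to exploit the optimality conditions for the two convex problems \eqref{eq:saa:2020-01-15T21:11:01.962} and \eqref{eq:saa:saalqcp}, expressed via the subdifferential of $\Psi$. Since $u^*$ minimizes $\obj = \erpobj + \Psi$ and $u_N^*$ minimizes $\obj_N = \erpobj_N + \Psi$, and since by \Cref{lem:saa:2020-01-25T20:42:37.498} the function $\erpobj_N$ is \gateaux\ differentiable near $\dom{\Psi}$ (while $\erpobj$ is \gateaux\ differentiable at $u^*$ by \Cref{ass:2020-07-06T10:42:18.634_12}), I would first record the first-order optimality conditions in the inclusion form $-\nabla \erpobj(u^*) \in \partial \Psi(u^*)$ and $-\nabla \erpobj_N(u_N^*) \in \partial \Psi(u_N^*)$.

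Next I would use the monotonicity of the subdifferential of the convex function $\Psi$. For any two points and any subgradients $g^* \in \partial\Psi(u^*)$ and $g_N \in \partial\Psi(u_N^*)$, monotonicity gives $\innerp{\csp}{g^* - g_N}{u^* - u_N^*} \geq 0$. Substituting $g^* = -\nabla\erpobj(u^*)$ and $g_N = -\nabla\erpobj_N(u_N^*)$ yields
\begin{align*}
\innerp{\csp}{-\nabla\erpobj(u^*) + \nabla\erpobj_N(u_N^*)}{u^* - u_N^*} \geq 0,
\end{align*}
which is exactly \eqref{eq:Feb2820211042}. The core of the argument is thus the pairing of two optimality conditions against the single displacement vector $u^* - u_N^*$, with the sign controlled entirely by monotonicity of $\partial\Psi$.

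The main obstacle I anticipate is justifying the optimality conditions in the precise differentiability setting of the assumptions, rather than the monotonicity step itself. Because $\Psi$ is merely proper, convex, and lower-semicontinuous (\Cref{ass:2020-07-06T10:42:18.634_13}) while the smooth parts $\erpobj$ and $\erpobj_N$ are only \gateaux\ differentiable, I would be careful to invoke the sum rule for subdifferentials in a form valid when one summand is finite and \gateaux\ differentiable and the other is an arbitrary proper convex lsc function; the differentiable summand contributes a single subgradient $\nabla\erpobj(u^*)$, so $0 \in \nabla\erpobj(u^*) + \partial\Psi(u^*)$ follows from Fermat's rule. I would also note that both $u^*$ and $u_N^*$ lie in $\dom{\Psi}$ (else the objectives would be infinite), so the \gateaux\ derivatives appearing in \eqref{eq:Feb2820211042} are well defined via the neighborhood of $\dom{\Psi}$ guaranteed by Assumptions~\ref{ass:2020-07-06T10:42:18.634_21} and \ref{ass:2020-07-06T10:42:18.634_12} and \Cref{lem:saa:2020-01-25T20:42:37.498}.

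Finally, since $\omega$ is fixed throughout and the inequality is purely deterministic for that realization, no measurability or integrability considerations enter here; those are handled separately in \Cref{lem:saa:2020-07-08T14:08:44.744_1} and \Cref{lem:saa:2020-04-01T18:25:27.402}. The whole proof should therefore reduce to two lines once the optimality inclusions are in place, and I would expect the write-up to be short.
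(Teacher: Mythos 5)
Your proposal is correct and follows essentially the same route as the paper: the paper's proof derives the two variational inequalities $\innerp{\csp}{\nabla \erpobj(u^*)}{u-u^*}+\Psi(u)-\Psi(u^*)\geq 0$ for all $u\in\dom{\Psi}$ and its SAA analogue---which are exactly your subdifferential inclusions $-\nabla\erpobj(u^*)\in\partial\Psi(u^*)$ and $-\nabla\erpobj_N(u_N^*)\in\partial\Psi(u_N^*)$ restated---then tests each at the other solution and adds, which is precisely the proof of the monotonicity of $\partial\Psi$ that you invoke. Your cautionary remark about the sum rule is resolved in the paper by citing a direct derivation of these optimality conditions (\cite[Thm.\ 4.42]{Ito2008}), which requires no constraint qualification beyond the \gateaux\ differentiability supplied by the assumptions.
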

\begin{proof}
	Following the proof of \cite[Thm.\ 4.42]{Ito2008},
	we obtain for all $u \in \dom{\Psi}$,
	\begin{align}
	\label{eq:saa:2020-01-15T21:18:29.76}
	\begin{aligned}
	\innerp{\csp}{\nabla \erpobj(u^*)}{u- u^*}  + \Psi(u) - \Psi(u^*) &\geq 0,
	\\
	\innerp{\csp}
	{\nabla \erpobj_N(u_N^*)}{u- u_N^*}  
	+ \Psi(u) - \Psi(u_N^*) 
	&\geq 0.
	\end{aligned}
	\end{align}
	We have  $\Psi(u^*)$, $\Psi(u_N^*)  \in \real$. 	
	Choosing $u = u_N^*$ in the first and  $u = u^*$ in 
	the second estimate
	in \eqref{eq:saa:2020-01-15T21:18:29.76},
	and adding the resulting inequalities yields 
	\eqref{eq:Feb2820211042}.
\end{proof}
\begin{lemma}
	\label{lem:saa:2020-01-25T19:33:12.799}
	Under the hypotheses of \Cref{lem:saa:fonoclqcp}, we have
	\begin{align}
	\label{eq:saa:2020-01-25T19:33:12.799}
	\alpha \norm[\csp]{u^*-u_N^*}
	\leq \cnorm{\csp}{\nabla \erpobj_N(u^*)- \nabla \erpobj(u^*)}.
	\end{align}
\end{lemma}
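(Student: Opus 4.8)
The plan is to combine the two structural facts already established: the strong monotonicity of $\nabla \erpobj_N$ from \Cref{lem:saa:2020-01-25T20:42:37.498} and the variational inequality \eqref{eq:Feb2820211042} from \Cref{lem:saa:fonoclqcp}. First I would note that, as observed in the proof of \Cref{lem:saa:fonoclqcp}, both $u^*$ and $u_N^*$ belong to $\dom{\Psi}$. Hence \Cref{lem:saa:2020-01-25T20:42:37.498} applies to the pair $u_1 = u^*$, $u_2 = u_N^*$, which gives
$$\alpha \norm[\csp]{u^*-u_N^*}^2 \leq \innerp{\csp}{\nabla \erpobj_N(u_N^*) - \nabla \erpobj_N(u^*)}{u_N^* - u^*}.$$

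The key algebraic step is to insert $\nabla \erpobj(u^*)$ into the first argument, writing $\nabla \erpobj_N(u_N^*) - \nabla \erpobj_N(u^*) = [\nabla \erpobj_N(u_N^*) - \nabla \erpobj(u^*)] - [\nabla \erpobj_N(u^*) - \nabla \erpobj(u^*)]$. This splits the right-hand side into two inner products. The first one, namely $\innerp{\csp}{\nabla \erpobj_N(u_N^*) - \nabla \erpobj(u^*)}{u_N^* - u^*}$, is the negative of the quantity shown to be nonnegative in \eqref{eq:Feb2820211042}; it is therefore $\leq 0$ and may be discarded. What remains is
$$\alpha \norm[\csp]{u^*-u_N^*}^2 \leq \innerp{\csp}{\nabla \erpobj_N(u^*) - \nabla \erpobj(u^*)}{u^* - u_N^*},$$
and applying the Cauchy--Schwarz inequality bounds the right-hand side by $\cnorm{\csp}{\nabla \erpobj_N(u^*) - \nabla \erpobj(u^*)}\,\norm[\csp]{u^*-u_N^*}$.

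Since the argument is a short chain of standard manipulations, I do not expect a substantial obstacle; the only point requiring care is the final division by $\norm[\csp]{u^*-u_N^*}$, which I would handle by treating the degenerate case $u^* = u_N^*$ separately. In that case \eqref{eq:saa:2020-01-25T19:33:12.799} holds trivially because its left-hand side vanishes while its right-hand side is nonnegative; otherwise dividing through by the positive quantity $\norm[\csp]{u^*-u_N^*}$ yields the claimed estimate.
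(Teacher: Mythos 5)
Your proof is correct and follows essentially the same route as the paper's: both combine the strong monotonicity estimate of \Cref{lem:saa:2020-01-25T20:42:37.498} (applied to the pair $u^*$, $u_N^*$) with the optimality inequality \eqref{eq:Feb2820211042} and finish with Cauchy--Schwarz, your discarding of the nonpositive cross term being just a rearrangement of the paper's step of adding the two inequalities so that the gradients telescope. Your explicit treatment of the degenerate case $u^* = u_N^*$ before dividing is a minor point of care the paper leaves implicit.
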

\begin{proof}
	Choosing $u_2 = u^*$ and $u_1 = u_N^*$ in
	\eqref{eq:saa:2020-01-25T20:42:37.498}, 
	we find that
	\begin{align*}
	\innerp{\csp}{\nabla \erpobj_N(u^*) - \nabla \erpobj_N(u_N^*)}{u^* - u_N^*}
	\geq \alpha \norm[\csp]{u^*-u_N^*}^2. 
	\end{align*}
	Combined with \eqref{eq:Feb2820211042},
	and
	the Cauchy--Schwarz inequality, we get
	\begin{align*}
	\alpha \norm[\csp]{u^*-u_N^*}^2
	& \leq 
	\innerp{\csp}{\nabla \erpobj_N(u^*) - \nabla \erpobj_N(u_N^*)}
	{u^* - u_N^*}
	\\ &\quad  +
	\innerp{\csp}{\nabla \erpobj_N(u_N^*)- \nabla \erpobj(u^*)}{u^* - u_N^*}
	\\
	& \leq \cnorm{\csp}{\nabla \erpobj_N(u^*)- \nabla \erpobj(u^*)}
	\cnorm{\csp}{u^* - u_N^*}.
	\end{align*}
\end{proof}

\begin{proof}[{Proof of \Cref{thm:saa:2020-01-25T19:33:40.055}}] 
	\Cref{lem:saa:2020-07-08T14:08:44.744_1} ensures
	the measurability of 
	$u_N^* : \pOmega \to \csp$.
	We define $q : \Xi \to \csp$
	by $q(\xi) = \nabla_u \rpobj(u^*, \xi)-\nabla \erpobj(u^*)$.
	\Cref{ass:2020-07-06T10:42:18.634_12,ass:2020-07-06T10:42:18.634_22} 
	ensure that
	$q$ is well-defined and
	measurable. Hence, the random vectors $Z_i = q(\xi^i)$
	($i = 1, 2,  \ldots$) are independent identically distributed,
	and \Cref{lem:saa:2020-04-01T18:25:27.402}
	ensures that they have zero mean.
	Using the definitions
	of $\erpobj$ and of $\erpobj_N$
	provided in 
	\eqref{eq:saa:erpobjN}, 
	the \gateaux\ differentiability of $\erpobj$
	at $u^*$ (see \Cref{ass:2020-07-06T10:42:18.634_12}), 
	and 
	\Cref{lem:saa:2020-01-25T20:42:37.498}, we obtain
	$$
	\nabla \erpobj_N(u^*)-\nabla \erpobj(u^*)
	= \frac{1}{N}
	\sum_{i=1}^N \big(\nabla_u \rpobj(u^*, \xi^i)-\nabla \erpobj(u^*)\big)
	= \frac{1}{N} \sum_{i=1}^N Z_i.
	$$

	Now, we prove \eqref{eq:Feb2820211612}.
	Combining the above statements
	with the separability of the Hilbert space $\csp$,
	we get
	$\cE{\norm[\csp]{\sum_{i=1}^N Z_i}^2}
	= \sum_{i=1}^N \cE{\norm[\csp]{Z_i}^2}
	$ \cite[p.\ 79]{Yurinsky1995}.
	For $i=1, 2, \ldots$, 
	\Cref{ass:2020-07-06T10:42:18.634_4}
	yields
	$\cE{\norm[\csp]{Z_i}^2} \leq \sigma^2$.
	Together with the estimate
	\eqref{eq:saa:2020-01-25T19:33:12.799},
	we find that
	$$
	\alpha^2 
	\cE{ \norm[\csp]{u^*-u_N^*}^2}
	\leq 
	\cE{\cnorm{\csp}{\nabla \erpobj_N(u^*)- \nabla \erpobj(u^*)}^2}
	\leq 
	\sigma^2/N,
	$$
	yielding the mean square error bound \eqref{eq:Feb2820211612}.

	Next, we establish \eqref{eq:saa:2020-01-25T19:33:40.055}.
	Fix $\varepsilon > 0$. If
	$\norm[\csp]{u^*-u_N^*} \geq \varepsilon$, then
	the estimate \eqref{eq:saa:2020-01-25T19:33:12.799} ensures that
	$\norm[\csp]{\sum_{i=1}^N Z_i} \geq N\alpha\varepsilon$.
	For $i = 1, 2, \ldots$,
	\Cref{ass:2020-07-06T10:42:18.634_3}
	implies that
	$
	\cE{\exp 
		(
		\tau^{-2}\norm[\csp]{Z_i}^2
		)
	}
	\leq \eu
	$.
	Applying \Cref{thm:saa:bOgutQ6IqE}, we get
	\begin{align*}
	\Prob{\norm[\csp]{u^*-u_N^*} \geq \varepsilon}
	\leq 
	\Prob[\bigg]{\cnorm[\Big]{\csp}
		{\sum_{i=1}^N Z_i} \geq N\alpha\varepsilon}
	\leq 2\eu^{-\tau^{-2}\varepsilon^2\alpha^2 N/3}.
	\end{align*}
	Hence the exponential 
	tail bound \eqref{eq:saa:2020-01-25T19:33:40.055} holds true.
\end{proof}

\section{Optimality of SAA Solutions' Exponential Tail Bounds}
\label{sec:optimalitytailbounds}

We show that the dependence of the tail bound 
\eqref{eq:saa:2020-01-25T19:33:40.055} on
the problem data is essentially optimal
for the problem class modeled by
Assumptions~\ref{ass:2020-07-06T10:42:18.634_1} 
and
\ref{ass:2020-07-06T10:42:18.634_3}.

Our example is inspired by that 
analyzed in \cite[Ex.\ 1]{Shapiro2008}.
We consider
\begin{align}
\label{eq:saa:2020-10-24T13:44:59.97}
\min_{u\in L^2(0,1)}\, 
\cE{(\alpha/2)\norm[L^2(0,1)]{u}^2
	- \inner[L^2(0,1)]{\rrhs(\xi)}{u}},
\end{align}
where $\alpha > 0$,
$\varphi_1$, $\varphi_2 \in L^2(0,1)$
are orthonormal,
$\rrhs : \real^2 \to L^2(0,1)$
is given by
$\rrhs(\xi) = \xi_1 \varphi_1 + \xi_2 \varphi_2$,
and  $\xi_1$, $\xi_2$ are independent, 
standard Gaussians.
The solution $u^*$ to 
\eqref{eq:saa:2020-10-24T13:44:59.97} is $u^* = 0$
since $\cE{\rrhs(\xi)} = 0$, and
the SAA solution $u_N^*$
corresponding to \eqref{eq:saa:2020-10-24T13:44:59.97}	is
$u_N^* = (1/\alpha)\bar{\xi}_{1,N}\varphi_1 + (1/\alpha)\bar{\xi}_{2,N}\varphi_2$,
where $\bar{\xi}_{j,N} = (1/N)\sum_{i=1}^N \xi_j^i$
for $j =1, 2$.
The orthonormality of $\varphi_1$, $\varphi_2$
yields
$
\norm[L^2(0,1)]{u_N^*}^2 =  
(1/\alpha)^2 (\bar{\xi}_{1,N})^2 +  (1/\alpha)^2(\bar{\xi}_{2,N})^2
$.
Since $(1/\alpha)\bar{\xi}_{1,N}$ and 
$(1/\alpha)\bar{\xi}_{2,N}$ are independent, 
mean-zero Gaussian with variance $N^{-1}\alpha^{-2}$,
the random variable
$N\alpha^2\norm[L^2(0,1)]{u^*-u_N^*}$ 
has a chi-square distribution
$\chi_2^2$ with two degrees of freedom.
Hence, for all $\varepsilon \geq 0$,
\begin{align}
\label{eq:saa:2020-10-24T16:00:24.521}
\Prob{\norm[L^2(0,1)]{u^*-u_N^*} \geq \varepsilon}
=
\Prob{\chi_2^2 \geq N\alpha^2\varepsilon^2}
= 
\eu^{-N\alpha^2\varepsilon^2/2}.
\end{align} 
Since 
$\rpobj(u, \xi) = (\alpha/2) \norm[L^2(0,1)]{u}^2
+\inner[L^2(0,1)]{\rrhs(\xi)}{u}$
and $\erpobj(u) = (\alpha/2)\norm[L^2(0,1)]{u}^2$,
we find that
$
\norm[L^2(0,1)]{\nabla_u \rpobj(u^*, \xi)-\nabla\erpobj(u^*)}^2
= \norm[L^2(0,1)]{\rrhs(\xi)}^2
$.
Combined with $\norm[L^2(0,1)]{\rrhs(\xi)}^2 \sim \chi_2^2$, 
we obtain
\href{https://tinyurl.com/nehw627d}
{$\cE{\exp(\tau^{-2}\norm[L^2(0,1)]{\rrhs(\xi)}^2)} = \eu$
	for $\tau^2=2\eu/(\eu-1)$}.
Our computations
and the tail bound \eqref{eq:saa:2020-10-24T16:00:24.521} reveal that the 
exponential order of the tail bound in
\eqref{eq:saa:2020-01-25T19:33:40.055}
is optimal up to the 	constant 
$3\tau^2/2 
\href{https://tinyurl.com/d3f79fyj}{\approx 4.7}
$.

\section{Application to Linear-Quadratic Optimal Control}
\label{subsec:saa:lqocu}

We consider the linear-quadratic optimal control problem 
\begin{align}
\label{eq:saa:lqcp}
\min_{u \in \csp } \, 
\{\, 
(1/2)\cE{\norm[\hsp]{QS(u, \xi)-y_d}^2} + (\alpha/2)\norm[\csp]{u}^2
+ \Psi(u)
\, \},
\end{align}
where $\alpha > 0$,  $Q \in  \sblf[\hsp]{\ssp}$,
$y_d \in \hsp$ and $\hsp$ is a real, separable Hilbert space.
In this section,
$\csp$ and $\Psi : \csp \to \real \cup \{\infty\}$
fulfill 
Assumptions~\ref{ass:2020-07-06T10:42:18.634_11}
and
\ref{ass:2020-07-06T10:42:18.634_13}, 
respectively.
The parameterized solution operator
$S : \csp \times \Xi \to \ssp$ is defined as follows.
For each $(u, \xi) \in \csp \times \Xi$, $S(u, \xi)$ is the solution to:
\begin{align}
\label{eq:saa:leq}
\text{find} \quad y \in \ssp: \quad A(\xi)y + B(\xi) u = g(\xi).
\end{align}
The spaces $\ssp$ and 
$\asp$ are  real, separable Banach spaces, 
$A : \Xi \to \sblf[\asp]{\ssp}$ and
$B : \Xi \to \sblf[\asp]{\csp}$, 
$A(\xi)$ has a bounded inverse for each $\xi \in \Xi$,
and
$g : \Xi  \to \asp$.

We can model 
parameterized affine-linear elliptic and parabolic
PDEs with \eqref{eq:saa:leq}, such as the heat equation
with random inputs considered in \cite[Sect.\ 3.1.2]{MartnezFrutos2018}, 
and the elliptic PDEs with random inputs considered 
\cite{Martin2021,Garreis2019b,Sun2015}.
When $\domain \subset \real^d$ is
a bounded domain and $\csp = L^2(\domain)$, a popular choice has been 
$\Psi(\cdot)
= \gamma \norm[L^1(\domain)]{\cdot} + I_{\adcsp}(\cdot)$
for $\gamma \geq 0$, 
where $\adcsp \subset \csp$ is a nonempty, convex, closed set
\cite{Geiersbach2020,Stadler2009}.
Further nonsmooth
regularizers are considered in \cite[Sect.\ 4.7]{Ito2008}.

Defining $	K(\xi) = -QA(\xi)^{-1}B(\xi)$
and $h(\xi) = QA^{-1}(\xi)g(\xi)-y_d$,
the control problem
\eqref{eq:saa:lqcp} can be written as
\begin{align}
\label{eq:inverse}
\min_{u \in \csp } \, 
\{\, 
(1/2)\cE{\norm[\hsp]{K(\xi)u+h(\xi)}^2} +
(\alpha/2)\norm[\csp]{u}^2
+ \Psi(u)
\, \}.
\end{align}
We discuss differentiablity and the lack of 
strong convexity of the expectation function
$\erpobj_1 : \csp \to \real \cup \{\infty\}$ defined by
\begin{align}
\label{eq:erpobj1}
\erpobj_1(u) = (1/2)\cE{\norm[\hsp]{K(\xi)u + h(\xi)}^2}.
\end{align}

\begin{assumption}
	\label{ass:2020-07-08T12:04:08.104}
	The map $K : \Xi \to \sblf[\hsp]{\csp}$
	is strongly measurable and
	$h : \Xi \to \hsp$ is strongly measurable.
	For each $u \in \csp$, 
	$\cE{\norm[\csp]{K(\xi)^*K(\xi)u}} < \infty$, and
	$\cE{\norm[\hsp]{h(\xi)}^2}$,
	$\cE{\norm[\csp]{K(\xi)^*h(\xi)}} < \infty$.
\end{assumption}
We define the integrand $\rpobj_1 : \csp \times \Xi \to \real$ by
\begin{align}
\label{eq:rpobj1}
\rpobj_1(u, \xi) = (1/2)\norm[\hsp]{K(\xi)u+h(\xi)}^2.
\end{align}
Under the measurability conditions 
stated in \Cref{ass:2020-07-08T12:04:08.104}, we can show that
$\rpobj_1$ is a \Caratheodory\
function.

\Cref{ass:2020-07-08T12:04:08.104} 
implies that the function $\erpobj_1$ defined in
\eqref{eq:erpobj1} is smooth.

\begin{lemma}
	\label{lem:saa:gradhesserpobj1}
	If \Cref{ass:2020-07-08T12:04:08.104} holds,
	then $\erpobj_1$ 
	defined in  \eqref{eq:erpobj1} 
	is infinitely many times continuously differentiable,
	and for all $u$, $v \in \csp$,
	\begin{align*}
	\nabla \erpobj_1(u) = \cE{K(\xi)^*(K(\xi)u+h(\xi))}
	\quad \tand \quad 
	\nabla^2 \erpobj_1(u)[v] = 
	\cE{K(\xi)^*K(\xi)v}.
	\end{align*}
\end{lemma}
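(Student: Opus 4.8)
The plan is to compute the \gateaux\ derivatives of $\erpobj_1$ by differentiating under the expectation, and then to upgrade to continuous (indeed $C^\infty$) \fr\'echet differentiability by observing that the resulting derivative formulas are continuous in $u$. First I would fix $u, v \in \csp$ and expand the integrand pointwise: writing $\rpobj_1(u,\xi) = (1/2)\norm[\hsp]{K(\xi)u + h(\xi)}^2$, a direct computation using the inner product on $\hsp$ gives the quadratic-in-$t$ expression
\begin{align*}
\rpobj_1(u + t v, \xi) = \rpobj_1(u,\xi) + t\inner[\hsp]{K(\xi)u + h(\xi)}{K(\xi)v} + \tfrac{t^2}{2}\norm[\hsp]{K(\xi)v}^2.
\end{align*}
Using the adjoint relation $\inner[\hsp]{K(\xi)u + h(\xi)}{K(\xi)v} = \inner[\csp]{K(\xi)^*(K(\xi)u + h(\xi))}{v}$ identifies the pointwise directional derivative as $\inner[\csp]{K(\xi)^*(K(\xi)u + h(\xi))}{v}$.

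Next I would take expectations and justify interchanging $\cE{\cdot}$ with the limit $t \to 0$. Because the integrand is exactly quadratic in $t$, the difference quotient is affine in $t$ with coefficients $\inner[\csp]{K(\xi)^*(K(\xi)u + h(\xi))}{v}$ and $(t/2)\norm[\hsp]{K(\xi)v}^2$; the integrability hypotheses in \Cref{ass:2020-07-08T12:04:08.104}---namely $\cE{\norm[\csp]{K(\xi)^*K(\xi)u}} < \infty$, $\cE{\norm[\csp]{K(\xi)^*h(\xi)}} < \infty$, and $\cE{\norm[\hsp]{h(\xi)}^2} < \infty$---ensure each term is integrable, so the limit passes inside by dominated convergence on a bounded $t$-interval. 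This yields $\inner[\csp]{\nabla\erpobj_1(u)}{v} = \cE{\inner[\csp]{K(\xi)^*(K(\xi)u + h(\xi))}{v}}$ for every $v$, and since the right-hand side is linear and bounded in $v$ (by the integrability assumptions), the Riesz representative is the stated $\nabla\erpobj_1(u) = \cE{K(\xi)^*(K(\xi)u + h(\xi))}$, where the $\csp$-valued expectation is the Bochner integral. For the second derivative I would differentiate $u \mapsto \nabla\erpobj_1(u)$, which is \emph{affine} in $u$ with linear part $v \mapsto \cE{K(\xi)^*K(\xi)v}$; hence $\nabla^2\erpobj_1(u)[v] = \cE{K(\xi)^*K(\xi)v}$ is constant in $u$, and all higher derivatives vanish.

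Finally I would promote these computations to genuine $C^\infty$ smoothness. Since $\nabla^2\erpobj_1$ is a constant bounded operator (boundedness following from $\cE{\norm[\csp]{K(\xi)^*K(\xi)u}} < \infty$ for each $u$ together with the uniform boundedness principle, or directly from measurability plus the closed-graph theorem), $\nabla\erpobj_1$ is affine and therefore continuously \fr\'echet differentiable; the third and higher derivatives are identically zero, so $\erpobj_1$ is infinitely many times continuously differentiable.

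The main obstacle I anticipate is measurability and integrability bookkeeping for the Bochner integrals: I must confirm that $\xi \mapsto K(\xi)^*(K(\xi)u + h(\xi))$ is strongly measurable (this follows from the strong measurability of $K$ and $h$ in \Cref{ass:2020-07-08T12:04:08.104}, since taking adjoints and composing preserve strong measurability of separably-valued maps) and that the dominating function for the difference quotient is integrable uniformly over small $t$. The quadratic structure makes the analytic estimates routine, so the real care lies in verifying the hypotheses of the vector-valued dominated convergence theorem and of the interchange of Bochner integration with the bounded operator $\inner[\csp]{\cdot}{v}$.
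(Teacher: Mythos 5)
Your proposal is correct and takes essentially the same route as the paper: compute the pointwise gradient and Hessian of the quadratic integrand, pass the expectation through the derivative, establish boundedness of the linear map $w \mapsto \cE{K(\xi)^*K(\xi)w}$, and conclude infinite differentiability from the affine/quadratic structure of $\nabla \erpobj_1$. The only difference is one of packaging: where the paper cites known results (a differentiation-under-the-expectation lemma and a Hille--Phillips theorem for boundedness of the averaged operator), you supply direct justifications (the exact quadratic expansion making the difference quotient affine in $t$, plus dominated convergence, and closed graph or uniform boundedness), which is equally valid.
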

\begin{proof}
	The strong measurability of $K$
	implies that of $\xi \mapsto K(\xi)^*$
	\cite[Thm.\ 1.1.6]{Hytoenen2016}
	and hence that of $\xi \mapsto K(\xi)^* K(\xi)$
	\cite[Cor.\ 1.1.29]{Hytoenen2016}.
	Fix $u$, $ v\in \csp$ and $\xi \in \Xi$.
	Since
	$
	\norm[\hsp]{K(\xi)u}^2 \leq \norm[\csp]{u}\norm[\csp]{K(\xi)^*K(\xi)u}
	$
	\cite[p.\ 199]{Kreyszig1978},
	\Cref{ass:2020-07-08T12:04:08.104} 
	ensures that
	$\erpobj_1$ is finite-valued. 
	Using \eqref{eq:rpobj1}, we 
	find that
	$\nabla_u \rpobj_1(u, \xi) = K(\xi)^*(K(\xi)u+h(\xi))$
	and 
	$\nabla_{uu} \rpobj_1(u, \xi)[v] = K(\xi)^*K(\xi)v$.
	Combined with \Cref{ass:2020-07-08T12:04:08.104}
	and \cite[Lem.\ C.3]{Geiersbach2020}, 
	we obtain that $\erpobj_1$ is \gateaux\ differentiable 
	with $\nabla \erpobj_1(u) = \cE{\nabla_u \rpobj_1(u, \xi)}$.
	Since $\cE{\norm[\csp]{K(\xi)^*K(\xi)w}} < \infty$
	for all $w\in \csp$,
	$w\mapsto \cE{K(\xi)^*K(\xi)w}$
	is linear and bounded 
	\cite[Thm.\ 3.8.2]{Hille1957}.
	Combined with the fact that $\rpobj_1(\cdot, \xi)$
	is quadratic for all $\xi \in \Xi$, 
	we conclude that $\erpobj_1$ is twice \gateaux\ differentiable
	with
	$
	\nabla^2 \erpobj_1(u)[v] = 
	\cE{K(\xi)^*K(\xi)v}
	$
	and hence 
	infinitely many times continuously differentiable.
\end{proof}

The function
$\erpobj_1$ 
defined in \eqref{eq:erpobj1} 
lacks strong convexity under natural conditions;
see \Cref{lem:lsc_erpobj}.
In this case, we may deduce 
that the strong convexity of the objective function of \eqref{eq:saa:lqcp}
solely comes from the function
$(\alpha/2)\norm[\csp]{\cdot}^2 + \Psi(\cdot)$, 
and that the largest strong convexity parameter of
$\erpobj(\cdot) = \erpobj_1(\cdot)+ (\alpha/2)\norm[\csp]{\cdot}^2$
is $\alpha > 0$.

\begin{assumption}
	\label{ass:2021-04-02T11:40:52.142}
	The mapping $K : \Xi \to \sblf[\hsp]{\csp}$
	is uniformly measurable, 
	$\cE{\norm[{\sblf[\hsp]{\csp}}]{K(\xi)}^2} < \infty$,
	and $K(\xi)$ is compact for all $\xi \in \Xi$. 
	Moreover, the Hilbert space $\csp$ is infinite-dimensional.
\end{assumption}

\begin{lemma}
	\label{lem:lsc_erpobj}
	If \Cref{ass:2020-07-08T12:04:08.104,ass:2021-04-02T11:40:52.142}
	hold,
	then  the expectation function $\erpobj_1$ 
	defined in \eqref{eq:erpobj1} is not strongly convex.
\end{lemma}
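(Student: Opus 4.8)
The plan is to exploit the explicit form of the Hessian of $\erpobj_1$ established in \Cref{lem:saa:gradhesserpobj1}. Since $\rpobj_1(\cdot, \xi)$ is quadratic for every $\xi \in \Xi$, the function $\erpobj_1$ is itself quadratic with constant Hessian $H = \cE{K(\xi)^*K(\xi)} \in \sblf[\csp]{\csp}$, independent of the base point. Consequently $\erpobj_1$ is $\alpha$-strongly convex for some $\alpha > 0$ if and only if $\erpobj_1(\cdot) - (\alpha/2)\norm[\csp]{\cdot}^2$ is convex, which for a quadratic function is equivalent to the operator inequality $\inner[\csp]{Hv}{v} \geq \alpha \norm[\csp]{v}^2$ for all $v \in \csp$ (the linear and constant terms of $\erpobj_1$ being irrelevant to convexity). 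Thus it suffices to show that $H$ is not bounded below by a positive multiple of the identity, that is, $\inf_{\norm[\csp]{v}=1} \inner[\csp]{Hv}{v} = 0$.

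First I would record the elementary structural properties of $H$. Each operator $K(\xi)^*K(\xi)$ is self-adjoint and positive, and $\inner[\csp]{Hv}{v} = \cE{\inner[\csp]{K(\xi)^*K(\xi)v}{v}} = \cE{\norm[\hsp]{K(\xi)v}^2} \geq 0$, so $H$ is self-adjoint and positive semidefinite. The hypothesis $\cE{\norm[{\sblf[\hsp]{\csp}}]{K(\xi)}^2} < \infty$ yields $\cE{\norm[{\sblf[\csp]{\csp}}]{K(\xi)^*K(\xi)}} \leq \cE{\norm[{\sblf[\hsp]{\csp}}]{K(\xi)}^2} < \infty$, so the operator-valued map $\xi \mapsto K(\xi)^*K(\xi)$ is Bochner integrable with respect to the operator norm; here the uniform measurability of $K$ assumed in \Cref{ass:2021-04-02T11:40:52.142} is essential, as it guarantees that $\xi \mapsto K(\xi)^*K(\xi)$ is uniformly measurable and hence that the Bochner integral defining $H$ is well-defined in $\sblf[\csp]{\csp}$.

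The central step is to prove that $H$ is compact. For each $\xi \in \Xi$ the operator $K(\xi) : \csp \to \hsp$ is compact by \Cref{ass:2021-04-02T11:40:52.142}, hence so is its adjoint $K(\xi)^*$ and therefore the composition $K(\xi)^*K(\xi) : \csp \to \csp$. Since the compact operators form a closed linear subspace of $\sblf[\csp]{\csp}$ in the operator norm, and the Bochner integral of an integrable function taking values almost everywhere in a closed subspace again lies in that subspace, the operator $H = \cE{K(\xi)^*K(\xi)}$ is compact. I expect this to be the main obstacle, since it is the only place where the assumptions genuinely interact: passing compactness through the expectation requires both the uniform measurability (to make the integral converge in operator norm) and the closedness of the space of compact operators.

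Finally, I would invoke the spectral theory of compact self-adjoint operators. A compact, self-adjoint, positive operator on the infinite-dimensional Hilbert space $\csp$ cannot be boundedly invertible; its spectrum consists of nonnegative eigenvalues accumulating only at $0$, so $0 \in \sigma(H)$ and $\inf_{\norm[\csp]{v}=1} \inner[\csp]{Hv}{v} = 0$. In particular there is no $\alpha > 0$ with $\inner[\csp]{Hv}{v} \geq \alpha \norm[\csp]{v}^2$ for all $v$, which by the reduction in the first paragraph shows that $\erpobj_1$ is not strongly convex. The infinite-dimensionality of $\csp$ from \Cref{ass:2021-04-02T11:40:52.142} is precisely what forces $0$ into the spectrum.
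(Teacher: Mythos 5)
Your proof is correct, and it reaches the same pivotal fact as the paper---compactness of the expected operator $\cE{K(\xi)^*K(\xi)}$---by a genuinely different argument. The paper proves compactness sequentially: writing $T(\xi) = K(\xi)^*K(\xi)$, it takes $v_k$ converging weakly to $\bar v$, uses the compactness of each $T(\xi)$ to get $T(\xi)v_k \to T(\xi)\bar v$ pointwise in $\xi$, dominates via $\norm[\csp]{T(\xi)(v_k-\bar v)} \leq C \norm[{\sblf[\csp]{\csp}}]{T(\xi)}$ together with $\cE{\norm[{\sblf[\csp]{\csp}}]{T(\xi)}} < \infty$ to apply dominated convergence, and then invokes reflexivity of $\csp$ to upgrade the resulting complete continuity of $\cE{T(\xi)}$ to compactness. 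You instead form the Bochner integral of $T$ in the Banach space $\sblf[\csp]{\csp}$ and use the fact that a Bochner integral of a function valued in a closed subspace (here the closed ideal of compact operators) again lies in that subspace. Both are legitimate; the trade-off is that your route leans harder on uniform measurability---Bochner integrability in operator norm is precisely what that assumption provides---whereas the paper's dominated-convergence argument operates at the level of the strong (pointwise) integral and, given the separability of $\csp$, could in principle run under mere strong measurability of $K$. One bookkeeping step you should make explicit: your $H$ is the operator-norm Bochner integral, while the Hessian in \Cref{lem:saa:gradhesserpobj1} is defined pointwise as $\cE{K(\xi)^*K(\xi)v}$; these agree because evaluation at $v$ is a bounded linear map and hence commutes with the Bochner integral (the paper records this as $\cE{T(\xi)w} = \cE{T(\xi)}w$, citing Hille--Phillips). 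The closing steps essentially coincide: the paper notes that a compact self-adjoint operator on the infinite-dimensional space $\csp$ lacks a bounded inverse and is therefore noncoercive, which is exactly your spectral statement $\inf_{\norm[\csp]{v}=1}\innerp{\csp}{Hv}{v} = 0$.
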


\begin{proof}
	We define $T : \Xi \to \sblf[\csp]{\csp}$
	by $T(\xi) = K(\xi)^*K(\xi)$.
	The uniform measurability of $K$ 
	implies that of $\xi \mapsto K(\xi)^*$
	(cf.\ \cite[Thm.\ 2.16]{Bharucha-Reid1972} and
	\cite[p.\ 200]{Kreyszig1978})
	and hence that of $T$ (cf.\	\cite[pp.\ 12--13]{Hytoenen2016}).
	Since $K(\xi)$ is compact, 
	$T(\xi)$ is compact \cite[p.\ 427]{Kreyszig1978}.
	Moreover, we have
	$\cE{\norm[{\sblf[\csp]{\csp}}]{T(\xi)}} =
	\cE{\norm[{\sblf[\hsp]{\csp}}]{K(\xi)}^2}$
	\cite[Thm.\ 3.9-4]{Kreyszig1978}.

	We show that $\cE{T(\xi)}$ is a compact operator.
	Let $(v_k) \subset \csp$ be weakly converging to some $\bar{v} \in \csp$.
	Hence there exists $C \in (0,\infty)$ with
	$\norm[\csp]{v_k}\leq C$ for all $k \in \natural$
	\cite[Thm.\ 4.8-3]{Kreyszig1978}
	which implies
	$\norm[\csp]{T(\xi)v_k} \leq C\norm[{\sblf[\csp]{\csp}}]{T(\xi)}$
	for each $\xi \in \Xi$ and $k \in \natural$.
	Since $T(\xi)$ is compact for all $\xi \in \Xi$, we have
	for each $\xi \in \Xi$,
	$T(\xi) v_k \to T(\xi)\bar{v}$ as $k \to \infty$
	\cite[Prop.\ 3.3.3]{Conway1990}.
	Combined with $\cE{\norm[{\sblf[\csp]{\csp}}]{T(\xi)}} < \infty$,
	the dominated convergence theorem 
	\cite[Prop.\ 1.2.5]{Hytoenen2016}
	yields 
	$\cE{T(\xi)v_k} \to \cE{T(\xi)\bar{v}}$
	as $k \to \infty$. We also have
	$\cE{T(\xi)w} = \cE{T(\xi)}w$ 
	for all $w \in \csp$
	\cite[p.\ 85]{Hille1957}.
	Thus
	$\cE{T(\xi)}v_k \to \cE{T(\xi)}\bar{v}$
	as $k \to \infty$.
	Since $\csp$ is reflexive and
	$(v_k)$ is arbitrary, $\cE{T(\xi)}$
	is compact \cite[Prop.\ 3.3.3]{Conway1990}.

	Now, we show that $\erpobj_1$ is not strongly convex.
	Since $\csp$ is infinite-dimensional,
	the self-adjoint, compact operator
	$\cE{T(\xi)}$ lacks a bounded inverse \cite[p.\ 428]{Kreyszig1978}, 
	\cite[Thm.\ 3.8.1]{Hille1957}.
	Hence it is noncoercive
	\cite[Lem.\ 4.123]{Bonnans2013}.
	Combined with $\nabla^2\erpobj_1(0) = \cE{T(\xi)}$
	(see \Cref{lem:saa:gradhesserpobj1} and \cite[p.\ 85]{Hille1957}),
	we conclude that $\erpobj_1$ is not strongly convex.
\end{proof}

The compactness of the Hessian 
of $\erpobj_1$ may also be studied
using the theory on spectral decomposition of
compact, self-adjoint operators
\cite[p.\ 159]{Vakhania1987}, 
or the results on the compactness of covariance
operators \cite[p.\ 174]{Vakhania1987}.

\subsection{Examples}
\label{subsec:saa:applications}

Many instances of the linear-quadratic control 
problem \eqref{eq:saa:lqcp}
frequently encountered in the literature are defined by
the following data: $\alpha > 0$, 
$\hsp = \csp$, 
$\ssp$ is a real Hilbert space, 
$Q \in \sblf[\hsp]{\ssp}$
is the embedding operator of 
the compact embedding $\ssp \embedding \hsp$, 
$B \in  \sblf[\ssp^*]{\csp}$ and
$g : \Xi \to \ssp^*$ is essentially bounded.
Moreover 
$A : \Xi \to  \sblf[\ssp^*]{\ssp}$
is uniformly measurable and
there exist constants
$0 < \rdc_{\min}^* \leq \rdc_{\max}^* < \infty$ 
with
$\norm[{\sblf[\ssp^*]{\ssp}}]{A(\xi)} \leq \rdc_{\max}^*$
and
$\dualp[\ssp]{A(\xi)y}{y} \geq \rdc_{\min}^*\norm[\ssp]{y}^2$
for all $(y,\xi) \in \ssp \times \Xi$.
The conditions imply that
$A(\xi)$ has a bounded inverse for each $\xi \in \Xi$
\cite[p.\ 101]{Kreyszig1978}
and imply the existence of a solution
to \eqref{eq:saa:lqcp}
when combined with Fatou's lemma; cf.\ \cite[Thm.\ 1]{Hoffhues2020}. Moreover 
\Cref{ass:2020-07-06T10:42:18.634_1,ass:at,%
	ass:2020-07-08T12:04:08.104,ass:2021-04-02T11:40:52.142}
hold true.

We show that \Cref{ass:2020-07-06T10:42:18.634_3} 
is violated for the class of optimal control problems
where the operator $A$ is elliptic and defined by a log-normal
random diffusion coefficient \cite{Ali2017,Charrier2012}.
Let $Q$ and $B$ be the embedding operators
of the embeddings 
$H_0^1(0, 1) \embedding L^2(0,1)$
and  $L^2(0,1) \embedding H^{-1}(0,1)$, respectively.
We choose $\Psi = 0$, $ \csp =  L^2(0,1)$, 
$y_d(\cdot) = \sin(\pi\cdot)/\pi^2$, 
and $A(\xi) = \eu^{-\xi}\bar{A}$, 
where  the weak Laplacian operator $\bar{A}$ is defined by
$\dualpHzeroone[0,1]{\bar{A}y}{v} = \innerp{L^2(0,1)}{y'}{v'}$,
and $\xi$ is a standard Gaussian random variable.
We have $\cE{\eu^{2\xi}} = \eu^2$ and $\cE{\eu^{\xi}} = \eu^{1/2}$.
Since $(\pi^2, y_d)$ is an eigenpair of $\bar{A}$, 
we find that  
$u^* = -\pi^2\eu^{1/2}y_d/(\eu^2+\pi^4\alpha) $
satisfies the  sufficient optimality
condition of \eqref{eq:saa:lqcp},
the normal equation
$
\alpha u^*  + \cE{\eu^{2\xi}}\bar{K}^*
\bar{K} u^* =\cE{\eu^{\xi}}
\bar{K}^*y_d
$,
where $\bar{K} = -Q \bar{A}^{-1}B$.
Hence $u^*$ is the solution to \eqref{eq:saa:lqcp}
for the above data.
Using the definition of $\rpobj_1$ provided in \eqref{eq:rpobj1},
we obtain
\begin{align*}
\nabla_u\rpobj_1(u^*, \xi) = 
\frac{\eu^\xi y_d}{\pi^2}
-\frac{\eu^{1/2+2\xi}y_d}{\pi^{2}(\eu^2+\pi^4\alpha)}
\end{align*}
For each $\xi \geq \ln(2(\eu^2+\pi^4\alpha))-1/2$, 
$\norm[L^2(0,1)]{\nabla_u\rpobj_1(u^*, \xi)} \geq (\eu^{\xi}/\pi^2)
\norm[L^2(0,1)]{y_d}$.
Combined with 
$
\nabla_u \rpobj(u^*,\xi) - \nabla \erpobj(u^*)
= 
\nabla_u \rpobj_1(u^*,\xi) - \nabla\erpobj_1(u^*)
$, 
$y_d \in L^2(0,1)$, 
and $\cE{\exp(s\xi^2/2)} = \infty$ for all $s \geq 1$
\cite[p.\ 9]{Buldygin2000}, we conclude that
\Cref{ass:2020-07-06T10:42:18.634_3} is violated.

\section{Numerical Illustration}
\label{sec:numresults}

We empirically verify the results 
derived in \Cref{thm:saa:2020-01-25T19:33:40.055}
for finite element
discretizations of two linear-quadratic, elliptic optimal control
problems, which are instances of \eqref{eq:saa:lqcp}.

For both examples, we consider
$\domain = (0,1)^2$,
and the mapping $Q$ in \eqref{eq:saa:lqcp} is the embedding operator of
the compact embedding $H_0^1(\domain) \embedding L^2(\domain)$.
Moreover, we define $y_d \in L^2(\domain)$ by
$y_d(x_1, x_2) = (1/6)\exp(2x_1)\sin(2\pi x_1)\sin(2\pi x_2)$
as in \cite[p.\ 511]{Bergounioux2000}.
For each $(u,\xi) \in L^2(\domain) \times \Xi$, 
$y(\xi) = S(u,\xi) \in H_0^1(\domain)$
solves the weak form of the linear elliptic PDE
\begin{align*}
-\nabla \cdot (\kappa(x,\xi)\nabla y(x,\xi)) = u(x) + r(x,\xi),
\quad x \in \domain,  
\quad y(x,\xi) = 0, \quad x \in \partial\domain,
\end{align*}
where $\partial\domain$ is the boundary of the domain $\domain$.
The set $\Xi$, the parameter $\alpha > 0$, the diffusion coefficient 
$\kappa : \domain \times \Xi \to (0,\infty)$
and the random right-hand side $r : \domain \times \Xi \to \real$
are defined in \Cref{example1,example2}.
Defining 
$\dualpHzeroone[\domain]{Bu}{v} = -\inner[L^2(\domain)]{u}{v}$,
and
\begin{align*}
\dualpHzeroone[\domain]{A(\xi)y}{v} &= 
\int_\domain \kappa(x,\xi)\nabla y(x) \cdot \nabla v(x) \du x, 
\\
\dualpHzeroone[\domain]{g(\xi)}{v} &= 
\int_\domain r(x,\xi)v(x) \du x,
\end{align*}
the weak form of the linear  PDE can be written
in the form provided in \eqref{eq:saa:leq}.

We approximate the control problem \eqref{eq:saa:lqcp}  
using a finite element discretization.
The control space $\csp =  L^2(\domain)$ is discretized
using piecewise constant
functions and the state space $\ssp =  H^1_0(\domain)$ 
is discretized using piecewise linear continuous 
functions defined on a triangular mesh
on	$[0,1]^2$
with $n \in \natural$ being the number of cells in each direction,
yielding finite element approximations of  
\eqref{eq:saa:lqcp} and corresponding SAA problems.
To simplify notation, we omit the index $n$ when referring to 
the solutions to these optimization problems. 
The dimension of the discretized control space is $2n^2$.

\begin{figure}[t]
	\centering
	\subfloat[\Cref{example1} with $n =  \nn$.\label{fig:nonsmooth}]{
		\includegraphics[width=0.5\textwidth]
		{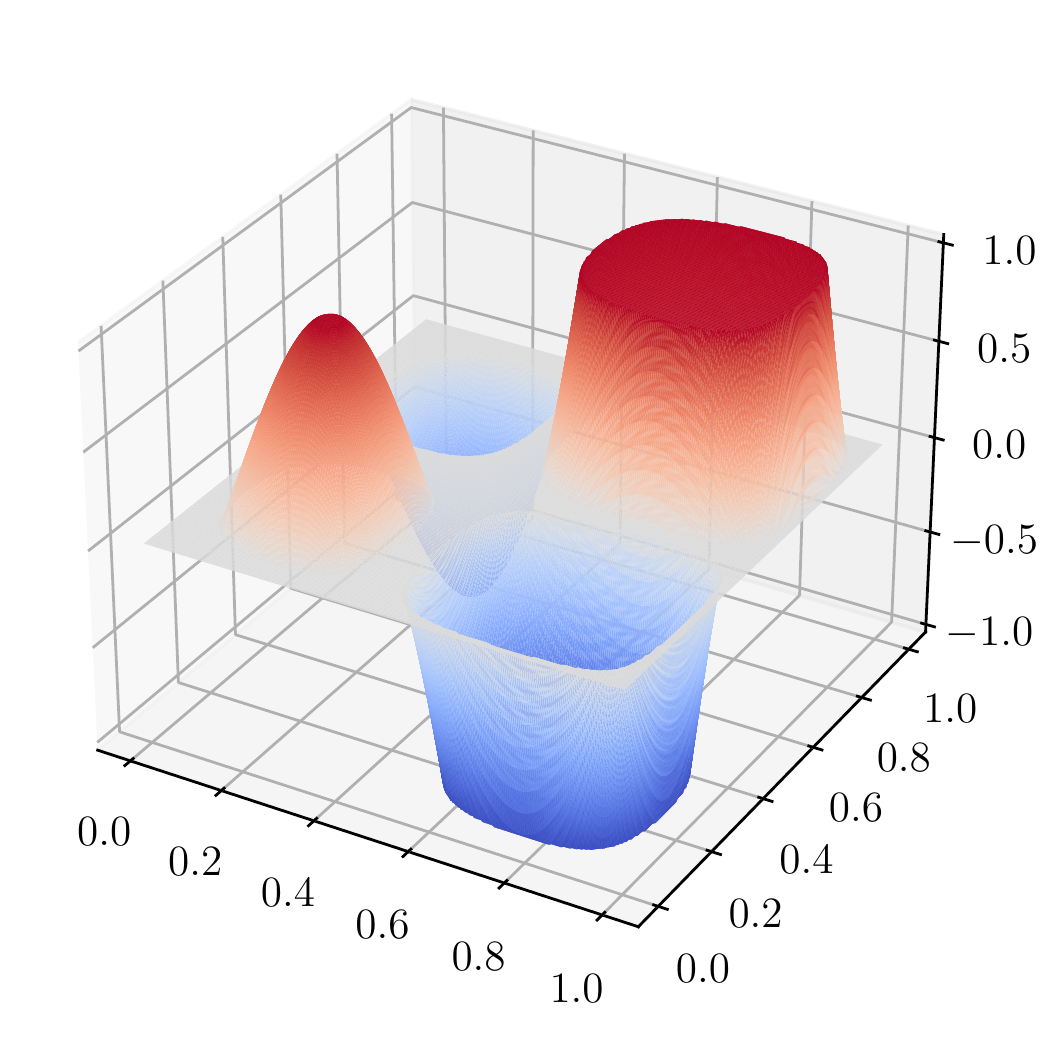}}
	\subfloat[\Cref{example2} with $n = \dn$.]{
		\includegraphics[width=0.48\textwidth]
		{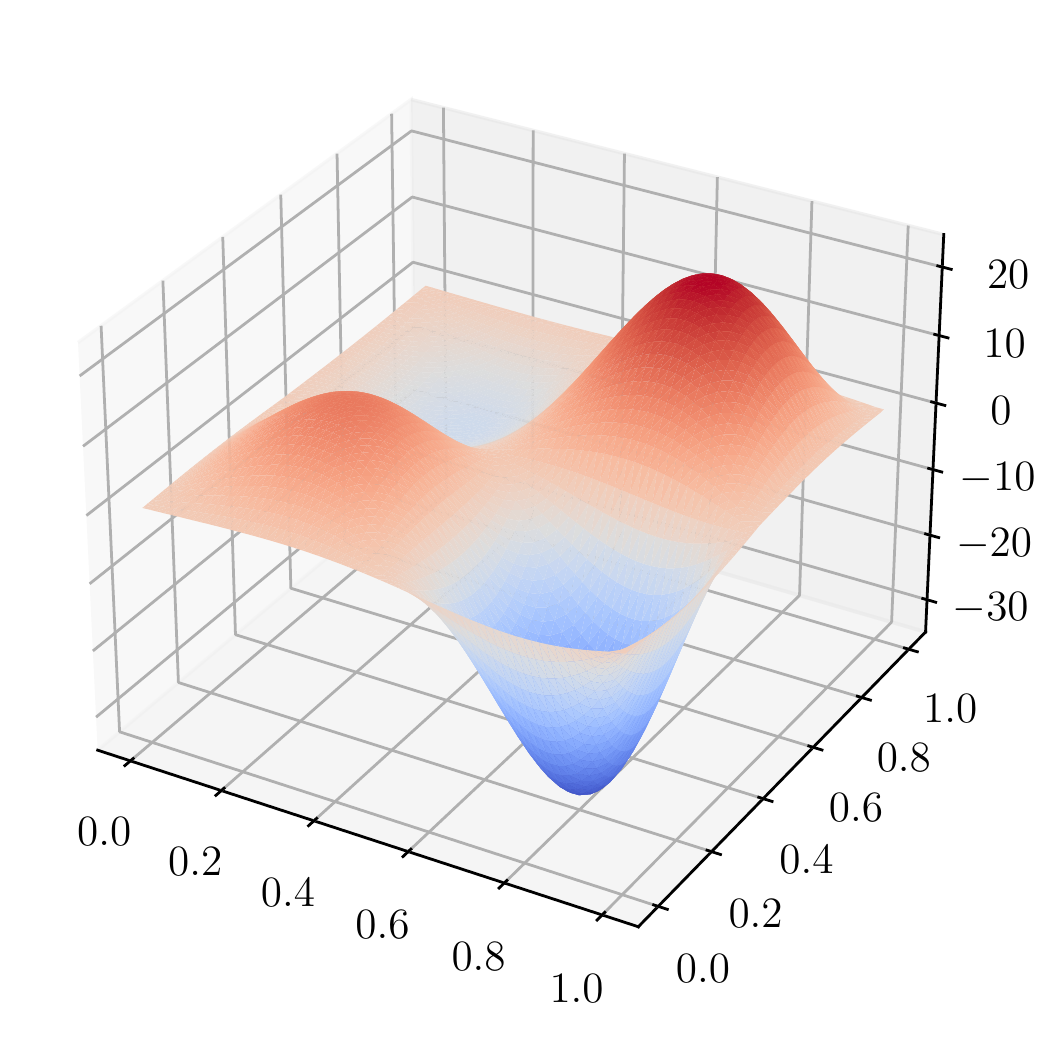}
	}
	\caption{Reference solutions.}
	\label{fig:reference_solutions}
\end{figure}

\begin{example}
	\label{example1}
	\normalfont 
	We define $\alpha = 10^{-3}$,
	$\Xi = [0.5, 3.5] \times [-1,1]$, 
	the random right-hand side
	$r(x,\xi) = \xi_2 \exp(2x_1)\sin(2\pi x_2)$, and
	$\kappa(\xi) = \xi_1$.
	The random variables $\xi_1$ and $\xi_2$ are
	independent, and $\xi_1$
	has a truncated normal distribution supported on $[0.5, 3.5]$
	with mean $2$ and standard deviation $0.25$
	(cf.\ \cite[p.\ 2092]{Geiersbach2019a}),
	and $\xi_2$ is uniformly distributed over $[-1,1]$.
	We choose 
	$\Psi(\cdot) = \gamma \norm[L^1(\domain)]{\cdot} + I_{\adcsp}(\cdot)$
	with $\gamma = 5.5 \cdot 10^{-4}$ and
	$\adcsp = \{\, u \in L^2(\domain) \colon \, -1 \leq u \leq 1 \, \}$,
	which is nonempty, closed, and convex \cite[p.\ 56]{Hinze2009}.
	Furthermore, let $n = \nn$.

	Since $\kappa(\xi) = \xi_1$ is a real-valued random variable, 
	we can evaluate  $\nabla \erpobj_1(u)$ and its empirical mean 
	using only two PDE solutions which can be shown by dividing
	\eqref{eq:saa:leq} by $\kappa(\xi)$.
	It allows us to compute the 
	solutions to the finite element approximation of \eqref{eq:saa:lqcp}
	and to their SAA problems with moderate computational effort even though
	$n = \nn$ is relatively large.

	We solved the finite element discretization 
	of \eqref{eq:saa:lqcp} and the SAA problems using a semismooth Newton
	method \cite{Stadler2009,Ulbrich2011,Pieper2015}
	applied to a normal map
	(cf.\ \cite[eq.\ (3.3)]{Pieper2015}),
	which provides a reformulation of the 
	first-order optimality conditions as a nonsmooth equation
	\cite[Sect.\ 3.1]{Pieper2015}. 
	The finite element discretization was performed using
	\texttt{FEniCs} \cite{Alnes2015,Logg2012}.
	Sparse linear systems were solved using a direct method.
\end{example}

\begin{example}
	\label{example2}
	\normalfont 
	We define $\alpha = 10^{-4}$, 
	$\Xi = [3,5] \times [0.5,2.5]$ and 
	the piecewise constant field
	$\kappa$ 
	by
	$\kappa(x,\xi) = \xi_1$
	if $x \in (0,1) \times (1/2,1)$
	and 
	$\kappa(x,\xi) = \xi_2$
	if $x \in (0,1) \times (1/2,1)$
	(cf.\ \cite[Example 3]{Geiersbach2019}).
	The random variables $\xi_1$ and $\xi_2$ are independent and uniformly
	distributed over $[3,5]$  and $[0.5,2.5]$, respectively.
	Moreover $r = 0$, $\Psi = 0$, and $n=\dn$.

	To obtain a deterministic reference solution to
	the finite element approximation of \eqref{eq:saa:lqcp}, 
	we approximate the probability distribution of $\xi$
	by a discrete uniform distribution.
	It is supported on the grid
	points of a uniform mesh of $\Xi$ using $50$ grid points
	in each direction, yielding a discrete distribution with $2500$ 
	scenarios. Samples for the SAA problems are generated from this
	discrete distribution.

	We used \href{http://www.dolfin-adjoint.org/}
	{\texttt{dolfin-adjoint}} 
	\cite{Farrell2013,Funke2013,Mitusch2019}
	with \texttt{FEniCs} \cite{Alnes2015,Logg2012} 
	to evaluate the SAA objective functions 
	and their derivatives, 	and solved the problems 	using 
	\href{https://github.com/funsim/moola}{\texttt{moola}}'s 
	\texttt{NewtonCG}\ method \cite{Funke2013,Schwedes2017}.
\end{example}

\begin{figure}[t]
	\centering
	\subfloat[\Cref{example1} with $n =  \nn$.]{
		\includegraphics[width=0.5\textwidth]
		{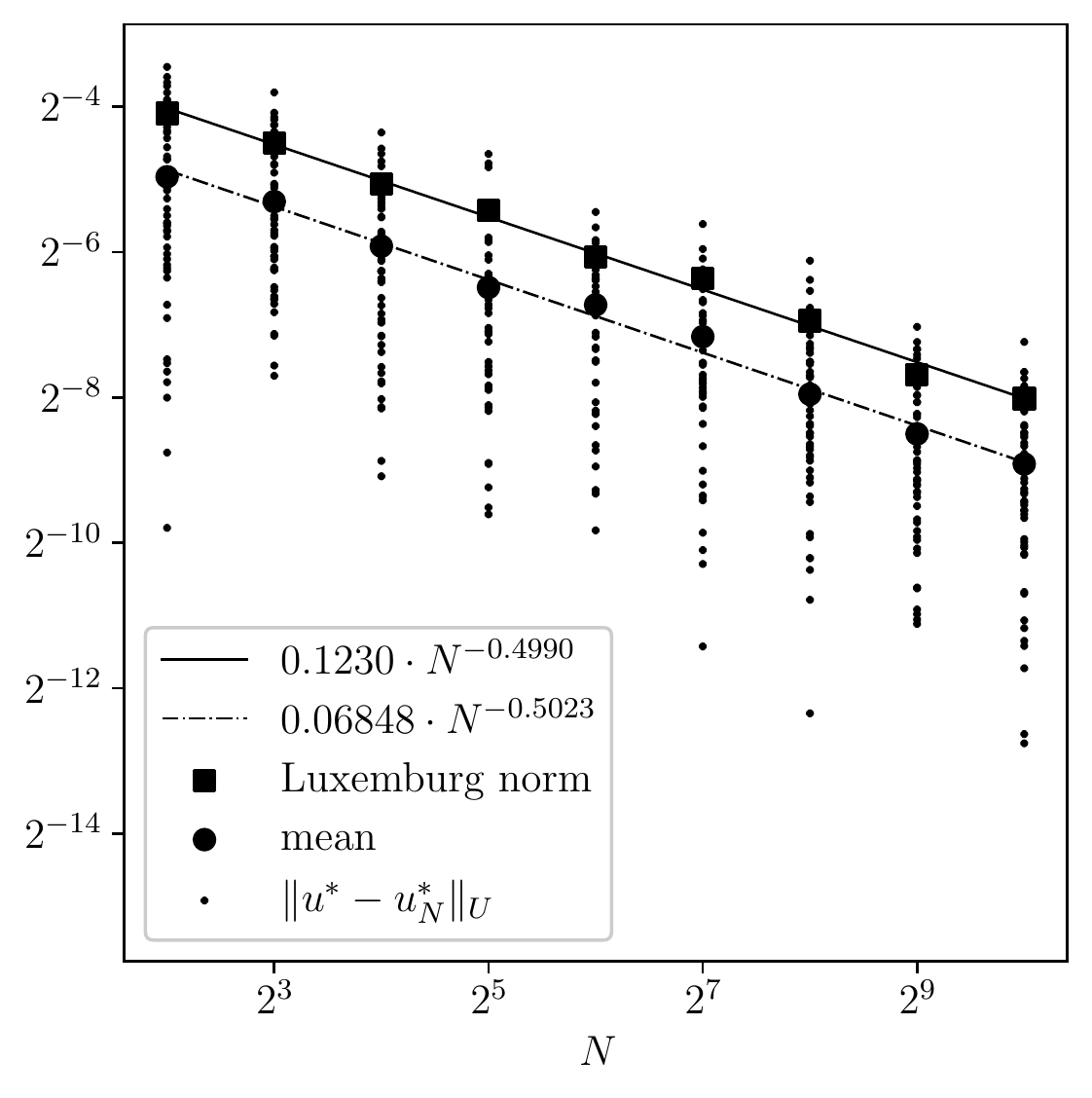}}
	\subfloat[\Cref{example2} with $n = \dn$.]{
		\includegraphics[width=0.4925\textwidth]
		{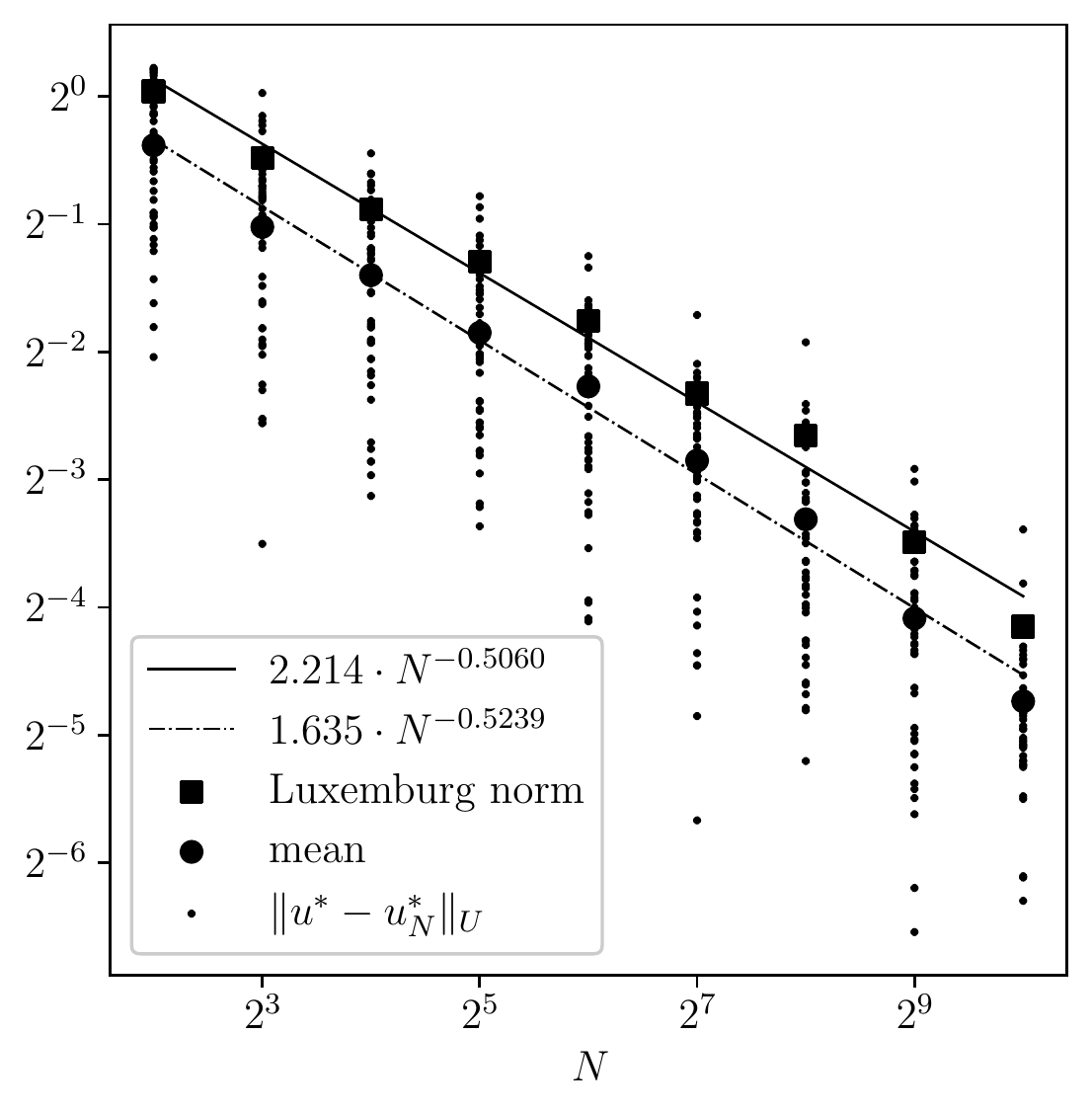}
	}
	\caption{For each example, $50$ independent realizations of 
		$\norm[\csp]{u^*-u_N^*}$, and
		the empirical mean  error and empirical Luxemburg norm.
		The convergence rates were computed using least squares.}
	\label{fig:convergence_rates}
\end{figure}

\Cref{fig:reference_solutions} depicts the
reference solutions for \Cref{example1,example2}.
To generate the surface plots depicted in \Cref{fig:reference_solutions}, 
the piecewise constant reference solutions were interpolated to the
space of piecewise linear continuous functions.

To illustrate  the convergence rate
$1/\sqrt{N}$ for $\cE{\norm[\csp]{u^*-u_N^*}}$, we generated $50$
independent samples of $\norm[\csp]{u^*-u_N^*}$
and computed the sample average. 
In order to empirically verify the exponential tail bound
\eqref{eq:saa:2020-01-25T19:33:40.055}, we use the
fact that it is equivalent
to a certain bound on the Luxemburg norm of 
$u^*-u_N^*$.
We define the Luxemburg norm $\luxemburg[\csp]{2}{\cdot}$
of a random vector $\rv : \Omega \to \csp$ by
\begin{align}
\label{eq:luxemburgnorm}
\luxemburg[\csp]{2}{Z}
= \inf_{\nu > 0} \, \{ \, \nu : \,
\cE{\phi(\norm[\csp]{Z}/\nu)} 
\leq 1 \, \},
\end{align}
where $\phi : \real \to \real$
is given by $\phi(x) = \exp(x^2)-1$,
and 
$L_{\phi}(\Omega; \csp) = L_{\phi(\norm[\csp]{\cdot})}(\Omega; \csp)$ is the 
Orlicz space consisting of each random vector $\rv : \Omega \to \csp$
such that there exists $\nu > 0$
with $\cE{\phi(\norm[\csp]{Z}/\nu)} < \infty$;
cf.\ \cite[Sect.\ 6.2]{Kosmol2011}.
The exponential tail bound \eqref{eq:saa:2020-01-25T19:33:40.055} implies 
\begin{align}
\label{eq:lux}
\luxemburg[\csp]{2}{u^*-u_N^*} 
\leq \frac{3\sqrt{3}\tau}{\alpha \sqrt{N}},
\end{align}
and
\eqref{eq:lux} 
ensures
$\Prob{\norm[\csp]{u^*-u_N^*} \geq \varepsilon}
\leq 2 \eu^{-\tau^{-2} N \varepsilon^2 \alpha^2/27}$
for all $\varepsilon > 0$. 
These two statements follow from
\cite[Thm.\ 3.4 on p.\ 56]{Buldygin2000} when applied to 
the real-valued random variable $\norm[\csp]{u^*-u_N^*}$. 
To empirically verify the convergence rate
$1/\sqrt{N}$ for $\luxemburg[\csp]{2}{u^*-u_N^*}$,
we approximated the expectation in \eqref{eq:luxemburgnorm}
using the same samples used to estimate $\cE{\norm[\csp]{u^*-u_N^*}}$.

\Cref{fig:convergence_rates} depicts
$50$ realizations of the errors $\norm[\csp]{u^*-u_N^*}$, 
the empirical approximations of
$\cE{\norm[\csp]{u^*-u_N^*}}$
and of the Luxemburg norm $\luxemburg[\csp]{2}{u^*-u_N^*}$
as well as the corresponding
convergence rates.
The rates were computed using least squares. 
The empirical convergences rates depicted in \Cref{fig:convergence_rates}
are close to  the theoretical rate
$1/\sqrt{N}$ for $\cE{\norm[\csp]{u^*-u_N^*}}$
and $\luxemburg[\csp]{2}{u^*-u_N^*}$; 
see \eqref{eq:Feb2820211612} and \eqref{eq:lux}.

\section{Discussion}
\label{sec:discussion}

We have considered convex stochastic programs posed in Hilbert spaces
where the integrand is strongly convex with the same
parameter for each random element's realization.
We have established exponential tail bounds for the distance
between SAA solutions and the true ones.
For this problem class, tail bounds are optimal up to
problem-independent, moderate constants. 
We have applied our findings to stochastic linear-quadratic control problems, 
a subclass of the above problem class.

We conclude the paper by 
illustrating that the ``dynamics'' of finite-
and infinite-dimensional stochastic programs can be quite different.
We consider
\begin{align}
\label{eq:exone}
\min_{\norm{x} \leq 1}\, \cE{\norm{x}^2-2x^T\zeta},
\end{align}
where $\zeta$ is an $\real^n$-valued, mean-zero Gaussian
random vector with covariance matrix $\sigma^2I$ and $\sigma^2 > 0$. 
This corresponds to the choice $m = 1$ in \cite[Ex.\ 1]{Shapiro2008}.
For $\delta \in (0, 0.3)$ and $\varepsilon \in (0, 1)$,
at least $N > n\sigma^2/\varepsilon = \cE{\norm{\zeta}^2}/\varepsilon$
samples are required for the 
corresponding SAA problem's optimal solution
to be an $\varepsilon$-optimal solution to 
\eqref{eq:exone} 
with a probability of at least $1-\delta$
\cite[Ex.\ 1]{Shapiro2008}.

The infinite-dimensional analogue of \eqref{eq:exone} is given by
\begin{align}
\label{eq:exoneinfinite}
\min_{\norm[\ell^2(\natural)]{u} \leq 1}\, 
\cE{\norm[\ell^2(\natural)]{u}^2-2\innerp{\ell^2(\natural)}{u}{\xi}},
\end{align}
where $\xi$ is an $\ell^2(\natural)$-valued,
mean-zero Gaussian random vector, and
$\ell^2(\natural)$ is the standard sequence space.
For each $\varepsilon \in (0, 1)$, 
the SAA solution  $u_N^*$ corresponding to \eqref{eq:exoneinfinite} is
an $\varepsilon$-optimal solution 
to \eqref{eq:exoneinfinite} 
if and only if we have
$\norm[\ell^2(\natural)]{(1/N)\sum_{i=1}^N \xi^i}^2 \leq \varepsilon$.
Combining \Cref{prop:saa:2020-11-21T20:25:01.71}
with \cite[Rem.\ 4]{Pinelis1986}, we find that
$N \geq (3/\varepsilon)\ln(2/\delta)\cE{\norm[\ell^2(\natural)]{\xi}^2}$
samples are sufficient in order for 
$u_N^*$ to be an $\varepsilon$-optimal solution
to \eqref{eq:exoneinfinite}, with  a probability of at least 
$1-\delta \in (0,1)$.

Let us compare the stochastic program
\eqref{eq:exone} with \eqref{eq:exoneinfinite}.
Whereas $\cE{\norm{\zeta}^2} = n \sigma^2 \to \infty$
as $n \to \infty$ and $\cE{|\zeta_k|^2}=\sigma^2$
($1\leq k\leq n$), we have
$\cE{\norm[\ell^2(\natural)]{\xi}^2}< \infty$
due to the Landau--Shepp--Fernique theorem 
and $\cE{|\xi_k|^2} \to 0$ as $k \to \infty$
\cite[p.\ 59]{Yurinsky1995}.
We find that the ``overall level-of-randomness'' for the finite-dimensional
problem \eqref{eq:exone} depends on its dimension $n$, while
that for the infinite-dimensional analogue  \eqref{eq:exoneinfinite} 
is fixed.

\subsection*{Acknowledgments}
	\small{%
		Parts of the presented results originate from the author's
	dissertation (submitted January, 2021). 
	The author thanks his committee members, 
	Professor Michael Ulbrich, Professor Karl Kunisch, 
	and Professor Alexander Shapiro, for their valuable comments
	on the dissertation. The author
	thanks Niklas Behringer for his constructive comments 
	on an earlier draft of the manuscript.}

\bibliography{convexsaa} 
\end{document}